\numberwithin{equation}{section}
\newtheorem{thm}[equation]{Theorem}
\newtheorem{prop}[equation]{Proposition}
\newtheorem{lemma}[equation]{Lemma}
\newtheorem{cor}[equation]{Corollary}
\newtheorem{con}[equation]{Conjecture}
\theoremstyle{definition}
\newtheorem{rem}[equation]{Remark}
\newtheorem{example}[equation]{Example}
\newtheorem{dfn}[equation]{Definition}
\newcommand{\ind}{\mathop{\mathrm{ind}}}
\newcommand{\rk}{\operatorname{rk}}
\newcommand{\Ch}{\mathop{\mathrm{Ch}}\nolimits}
\newcommand{\mult}{\operatorname{mult}}
\newcommand{\Z}{\mathbb{Z}}
\newcommand{\Spec}{\operatorname{Spec}}
\newcommand{\Hom}{\operatorname{Hom}}
\newcommand{\Corr}{\operatorname{Corr}}
\newcommand{\Aut}{\operatorname{Aut}}
\renewcommand{\phi}{\varphi}
\title
{Decompositions of motives of generalized Severi-Brauer varieties}
\keywords
{Central simple algebras, generalized Severi-Brauer varieties,
Chow groups and motives.}
\author
[M. Zhykhovich]
{Maksim Zhykhovich}
\address
{UPMC Univ Paris 06\\
Institut de Math\'ematiques de Jussieu (boite courrier 247) \\
4 place Jussieu\\
F-75252 Paris CEDEX 05\\
FRANCE}
\address
{{\it Web page:}
{\tt www.math.jussieu.fr/\~{ }zhykhovich}}
\email {zhykhovich {\it at} math.jussieu.fr}
\date
{\today}
\begin{document}

\begin{abstract}
Let $p$ be a positive prime number and $X$ be a Severi-Brauer variety of a central division algebra $D$ of degree $p^n$, with $n\geq 1$.
We describe all shifts of the motive of $X$ in the complete motivic decomposition of a variety $Y$, which splits over the function field of $X$ and satisfies the nilpotence principle. In particular, we prove the motivic decomposability of generalized Severi-Brauer varieties $X(p^m,D)$ of right ideals in $D$ of reduced dimension $p^m$, $m=0,1,\ldots,n-1$, except the cases $p=2$, $m=1$ and $m=0$ (for any prime $p$), where motivic indecomposability was proven by Nikita Karpenko.

\end{abstract}

\maketitle

\tableofcontents

\section
{Introduction}
Let $F$ be an arbitrary field and $p$ be a prime numbre. For any integer $l$, we write $v_p(l)$ for the exponent of the highest power of $p$ dividing $l$.

Let $D$ be a central division $F$-algebra of degree $p^n$, with $n\geq 1$. We
write $X(p^m,D)$ for the generalized Severi-Brauer variety of
right ideals in $D$ of reduced dimension $p^m$ for
$m=0,1,\dots,n$. In particular, $X(p^n,D)=\Spec F$ and $X(1,D)$ is the usual Severi-Brauer variety of $D$. The generalized Severi-Brauer varieties are twisted forms of grassmannians (see \cite[\S I.1.C]{MR1632779}).

For each integer $m=0,...,n$ we define an \textit{upper motive} $M_{m,D}$ in the category of Chow motives with coefficients in $\mathbb{F}_p$. This is the summand of the complete motivic decomposition of the variety $X(p^m,D)$ such that the $0$-codimensional Chow group of $M_{m,D}$ is non-zero.

Let $A$ be a central simple $F$-algebra, such that the $p$-primary component of $A$ is Brauer equivalent to $D$. Let $\mathfrak{X}_A$ be the class of finite direct products of projective $(\Aut A)$-homogeneous $F$-varieties (the class $\mathfrak{X}_A$ includes the generalized Severi-Brauer varieties of the algebra $A$).
 Nikita Karpenko proved the following theorem \cite[Theorem 3.8]{upper}. Any variety $X$ from $\mathfrak{X}_A$ decomposes into a sum of shifts of the motives $M_{m,D}$ with $m \leq v_p(\ind A_{F(X)})$. This theorem shows that the motivic indecomposable summands $M_{m,D}$ of the generalized Severi-Brauer varieties $X(p^m,D)$ are some kind of ``basic material" to construct the motives of more general class of varieties. This gives us a motivation to understand the structure of the upper motives $M_{m,D}$ themselves. It was known that in the cases $m=0$ (Severi-Brauer case, see Corollary \ref{M(SB(1,D))}) and $m=1$, $p=2$ ( \cite[Theorem 4.2]{upper}) the motive $M_{m,D}$ coincides with the whole motive of the variety $X(p^m,D)$ (that is, the motive of this variety is indecomposable). Taking into account these cases and the fact that any generalized Severi-Brauer variety $X(p^m,D)$ is $p$-incompressible \cite[Theorem 4.3]{upper} (this condition is weaker  than motivic indecomposability), one
probably expected that the Chow motive with coefficients in $\mathbb{F}_p$ of any variety
$X(p^m,D)$ is indecomposable. But, except the two already mentioned cases, the motivic decomposability of generalized Severi-Brauer variety $X(p^m,D)$ was proven in \cite{Zh}.

This article is an extended version of \cite{Zh}. To show that the motive of the variety $X(p^m,D)$ is decomposable, we prove in \cite{Zh} that some shifts of $M_{0,D}$ are the motivic summands of $X(p^m,D)$. Let $Y$ be a $F$-variety satisfying the nilpotence principle and such that it splits over the function field of $X(1,D)$. For example, one can take for $Y$ any generalized Severi-Brauer variety $X(p^m,D)$ and, more generally, any variety from $\mathfrak{X}_A$.  The main result of the present article (Theorem \ref{main}) find all shifts of $M_{0,D}$ in the complete motivic decomposition of the variety $Y$ in terms of some subgroups of rational cycles. These subgroups can be described in the case of generalized Severi-Brauer variety $X(p^m,D)$ (see Proposition \ref{Ch(XY)}). As consequence, we prove the motivic decomposability of these varieties in Corollary \ref{decomposability}.  With Theorem \ref{main} in hand, we find in
\S \ref{examples} more examples (comparing to \cite{Zh}) of complete motivic decompositions of generalized Severi-Brauer varieties $X(p^m,D)$ and therefore we describe the upper motives $M_{m,D}$ in that cases. Theorem \ref{main} also permits to prove differently (see Corollary \ref{rigidity}, \cite[Corollary 5]{Chls}) a particular case of the following conjecture.

\begin{con} \label{conj}
Let $D$ be a central division $F$-algebra. Let $K/F$ be a field extension such that $D_K$ is still division. Then $(M_{m,D})_K$ is still indecomposable.
\end{con}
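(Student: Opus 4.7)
The plan exploits the characterization of $M_{m,D}$ as the upper motive of $X(p^m,D)$, namely the unique indecomposable direct summand whose codimension-$0$ Chow group (with $\F_p$-coefficients) is non-zero. Since $D_K$ is still a division algebra, the base change $X(p^m,D)_K$ equals $X(p^m,D_K)$, whose upper motive is $M_{m,D_K}$ by the same characterization. As $(M_{m,D})_K$ is a direct summand of $X(p^m,D)_K$, Karpenko's theorem \cite[Theorem 3.8]{upper} forces $(M_{m,D})_K$ to decompose as a sum of shifts of motives $M_{m',D_K}$ with $m'\leq v_p(\ind D_K) = m$. The summand carrying the non-zero class in $\Ch^0$ must be $M_{m,D_K}$ itself in shift $0$, so one obtains
\[
(M_{m,D})_K \;\cong\; M_{m,D_K} \oplus N
\]
for some motive $N$ that is a sum of shifts of $M_{m',D_K}$ with $m'\leq m$ (in non-zero shifts).

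The strategy is then to rule out the extra summand $N$ by a rank comparison: one proves that the Poincar\'e polynomial of $(M_{m,D})_K$ over an algebraic closure of $K$ equals that of $M_{m,D_K}$. Over the algebraic closure both motives become sums of Tate motives, and the multiset of Tate shifts appearing in the indecomposable motive $M_{m,D}$ can in principle be read off the subgroup of rational cycles inside $\Ch^*(X(p^m,D)_{\bar F})$. The key point is that this multiset should depend only on the Brauer class of $D$, not on the base field; since $D_K$ has the same index and exponent as $D$, the multisets for $M_{m,D}$ and $M_{m,D_K}$ should coincide, which combined with the decomposition above forces $N=0$. Theorem \ref{main} supplies exactly this kind of information for shifts of $M_{0,D}$ in an arbitrary $Y$ splitting over the function field of $X(1,D)$, which already settles the case $m=0$ and is the source of the partial result recorded in Corollary \ref{rigidity}.

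The principal obstacle is the analogue of Theorem \ref{main} for $m>0$: one needs a description of the shifts of $M_{m,D}$ inside a variety $Y$ that splits over the function field of $X(p^m,D)$, expressed via subgroups of rational cycles that descend well along the extension $K/F$. Equivalently, one must show that whether a Tate shift in the split decomposition of $X(p^m,D)_{\bar F}$ belongs to the upper summand is governed by a condition on rational cycles that is stable under base change to $K$ when $D_K$ is division. This rigidity of the rational part of the Chow group is essentially the content of the conjecture, and seems to require a genuinely new input beyond the techniques developed here for the general case. The article therefore isolates the instance $m=0$, where the required description of rational cycles is precisely what Theorem \ref{main} provides, and leaves the general conjecture open.
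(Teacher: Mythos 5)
This statement is a \emph{conjecture}, not a theorem: the paper does not prove it, and the paper's only ``proof'' content is the observation (Remark 4.7) that the conjecture would follow from the same argument as Corollary \ref{rigidity} if one could show that every $X(p^m,D)$ is of ``type $0$,'' meaning its complete decomposition consists of the upper motive and shifts of $M_{0,D}$ only. Your analysis is correct in spirit and essentially matches the paper's own assessment: you reduce to the problem of controlling shifts of the various $M_{m',D}$ inside a suitable $Y$, and you correctly isolate the missing ingredient as an analogue of Theorem \ref{main} for $m' > 0$, i.e., a description via rational cycles that is stable under base change to $K$. The paper's Proposition \ref{Ch(XY)} already supplies the base-change stability of the relevant subring of rational cycles; what is missing, exactly as you say, is a version of Theorem \ref{main} that reads off shifts of $M_{m,D}$ (rather than just $M_{0,D}$) from such data.

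One small slip: you write that the article ``isolates the instance $m=0$.'' In fact the case $m=0$ of the conjecture is immediate from Corollary \ref{M(SB(1,D))} (indecomposability of the Severi--Brauer motive over any field where $D$ stays division) and needs none of the machinery here. The partial result obtained from Theorem \ref{main} is Corollary \ref{rigidity}, which is the case $m=1$: there Karpenko's theorem guarantees that all non-upper summands of $M(X(p,D))$ are shifts of $M_{0,D}$, so Theorem \ref{main} controls all of them, and Proposition \ref{Ch(XY)} shows the count is unchanged after passing to $K$. Your framework is otherwise sound; the ``$m=0$'' label for the settled case is the only inaccuracy, and the honest acknowledgment that the general case requires new input is exactly right.
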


\bigskip
\noindent
{\sc Acknowledgements.}
I would like to express particular
gratitude to Nikita Karpenko, my Ph.D. thesis adviser, for
introducing me to the subject, raising the question studied here,
and guiding me during this work. I am also very grateful to
Olivier Haution, Sergey Tikhonov and Skip Garibaldi for very useful discussions.

\section{Chow motives with finite coefficients}
A variety is a separated scheme of finite type over a field. Our basic reference for Chow groups and Chow motives (including
notations) is \cite{EKM}. We fix an associative unital commutative
ring $\Lambda$. Given a variety $X$ over a field $F$, we write $\mathrm{Ch}(X)$ and $\mathrm{CH}(X)$ respectively for its Chow group with
coefficients in $ \Lambda$ and for its integral Chow group.
For a field extension $L/F$ we denote by $X_L$ the respective extension of scalars.  An element of $\mathrm{Ch}(X_L)$ is called $F$-rational, if it lies in the image of the homomorphism $\mathrm{Ch}(X) \rightarrow \mathrm{Ch}(X_L)$.

Our category of motives is the category $\mathrm{CM}(F, \Lambda)$ of graded Chow motives with coefficients
in $\Lambda$, \cite[definition of \S\,64]{EKM}. By a sum of
motives we always mean the direct sum. We also write $\Lambda$ for
the motive $M(\mathrm{Spec} F) \in \mathrm{CM}(F, \Lambda)$. A
Tate motive is the motive of the form $\Lambda(i)$ with $i$ an
integer.

Let $X$ be a smooth complete variety over $F$ and let $M$ be a
motive. We call $M$ split if it is a finite sum of Tate motives.
We call $X$ split, if its integral motive $M(X)\in \mathrm{CM}(F,
\mathbb{Z})$ (and therefore the motive of $X$ with an arbitrary
coefficient ring $\Lambda$) is split. We call $M$ or $X$
geometrically split, if it splits over a field extension of $F$.
For a geometrically split variety $X$ over $F$, we denote by $\bar{X}$ the scalar extension of $X$ to a splitting field of its motive and we write
$\mathrm{\bar{Ch}}(X)$ for the subring of $F$-rational cycles in $\mathrm{Ch}(\bar{X})$. Note that the rings $\mathrm{Ch}(\bar{X})$ and $\mathrm{\bar{Ch}}(X)$ are independent on the choice of a splitting field.

Over an extension of $F$ the geometrically split motive $M$ becomes
isomorphic to a finite sum of Tate motives. We write $\mathrm{rk} \, M$ and $\mathrm{rk}_i \, M$ for
respectively the number of all summands and the number of summands
$\Lambda(i)$ in this decomposition, where $i$ is an integer. Note
that these two numbers do not depend on the choice of a
splitting field extension.

We say that $X$ satisfies the nilpotence principle, if for any
field extension $E/F$ and any coefficient ring $\Lambda$, the
kernel of the change of field homomorphism $\mathrm{End} (M(X))
\rightarrow \mathrm{End} (M(X)_E)$ consists of nilpotents. Any
projective homogeneous (under an action of a semisimple affine
algebraic group) variety is geometrically split and satisfies the
nilpotence principle, \cite[Theorem 92.4 with Remark 92.3]{EKM}.

A complete decomposition of an object in an additive category is a
finite direct sum decomposition with indecomposable summands. We
say that the Krull-Schmidt principle holds for a given object of a
given additive category, if every direct sum decomposition of the
object can be refined to a complete one (in particular, a complete
decomposition exists) and there is only one  (up to a permutation
of the summands) complete decomposition of the object. We have the
following theorem:

\begin{thm} {\upshape(\cite[Theorem 3.6 of Chapter I]{Ch-M})}.
Assume that the coefficient ring $\Lambda$ is finite. The
Krull-Schmidt principle holds for any shift of any summand of the
motive of any geometrically split $F$-variety satisfying the
nilpotence principle.
\end{thm}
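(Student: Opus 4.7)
The plan is to prove Krull-Schmidt for $M$ by showing that the endomorphism ring $R := \End(M)$ is \emph{semiperfect}, i.e.\ $R/J(R)$ is semisimple Artinian and idempotents lift modulo the Jacobson radical $J(R)$. Direct sum decompositions of $M$ correspond to orthogonal idempotent decompositions of $1 \in R$, and a summand is indecomposable precisely when the corresponding idempotent is primitive; in a semiperfect ring any two refinements of $1$ into primitive orthogonal idempotents are conjugate by a unit, which translates to existence and uniqueness (up to isomorphism and permutation) of a complete decomposition of $M$.

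I would first reduce to the case $M = (X, \pi)$ being an actual summand of $M(X)$, since shifts do not affect the degree-$0$ endomorphism ring; then $R = \pi \cdot \End M(X) \cdot \pi$. Fix a splitting field $L/F$, over which $M_L \cong \bigoplus_i \Lambda(i)^{\rk_i M}$, and consider the restriction homomorphism
\[
\phi \colon R \longrightarrow \End(M_L).
\]
Since morphisms between Tate motives of distinct twists vanish in degree $0$, we have $\End(M_L) = \Prod_i \mathrm{Mat}_{\rk_i M}(\Lambda)$, which is a \emph{finite} ring because $\Lambda$ is finite. Meanwhile, the nilpotence principle for $X$, restricted to the corner $R \subseteq \End M(X)$, ensures that $N := \ker \phi$ is a nil two-sided ideal of $R$.

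With these two inputs (nil kernel, finite image) in hand, the semiperfectness of $R$ is a purely ring-theoretic consequence. The quotient $R/N \hookrightarrow \End(M_L)$ is a finite, hence Artinian, hence semiperfect subring. Because $N$ is nil it is contained in $J(R)$, so
\[
R/J(R) \;\cong\; (R/N)\big/J(R/N)
\]
is semisimple Artinian. Moreover, idempotents lift modulo any nil ideal (a classical fact), and they lift from $R/J(R)$ to $R/N$ by the Artinianness of $R/N$; composing the two liftings yields idempotent lifting from $R/J(R)$ to $R$. Hence $R$ is semiperfect, and the Krull-Schmidt property for $M$ follows from the correspondence described in the first paragraph.

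The step requiring most care is verifying that the kernel $N$ really is a nil two-sided ideal (not merely a set of nilpotent elements) and that such a nil ideal is contained in the Jacobson radical of $R$; both are standard but must be checked cleanly in the noncommutative graded setting. The remainder of the argument packages the motivic content into precisely two well-known statements, namely the nilpotence principle and the finiteness of $\Lambda$, and combines them via classical semiperfect-ring theory, with no further motivic computation required.
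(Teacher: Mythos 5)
The paper does not prove this theorem; it is imported verbatim as a citation to Chernousov--Merkurjev. Your reconstruction is correct and follows the same strategy as the cited source: reduce Krull--Schmidt to semiperfectness of $R = \End(M)$, and establish semiperfectness by combining the two motivic inputs --- the nilpotence principle makes the kernel of $R \to \End(M_L)$ a nil two-sided ideal (note that nilpotence for $X$ transfers to the corner ring $\pi\End M(X)\pi$, since for $f=\pi f\pi$ the vanishing of $f_L$ in $\End(M_L)$ is the vanishing of $f_L$ in $\End(M(X)_L)$), while finiteness of $\Lambda$ forces $R/N$ to be a finite, hence Artinian, ring --- with the standard lemmas that a nil ideal lies in $J(R)$, that $J(R/N)=J(R)/N$ when $N\subseteq J(R)$, and that idempotents lift successively modulo $J(R/N)$ (nilpotent, by Artinianness) and modulo $N$ (nil). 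The final passage from semiperfectness of $\End(M)$ to Krull--Schmidt for $M$ uses that in a semiperfect ring $1$ decomposes into finitely many orthogonal primitive idempotents, which split to indecomposable summands with local endomorphism rings $eRe$, at which point Azumaya's uniqueness theorem applies; this is exactly the mechanism you invoke, and it is sound.
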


We will use the following two statements in the next section.
\begin{lemma}\label{non-deg}
Assume that the coefficient ring
$\Lambda$ is a field. Let $X$ be a split variety. Then the bilinear form \mbox{ $\mathfrak{b}:\mathrm{Ch}(X)\times
\mathrm{Ch}(X) \rightarrow \Lambda$}, $\mathfrak{b}(x,y)=
\mathrm{deg}(x \cdot y)$ is non-degenerate.
\end{lemma}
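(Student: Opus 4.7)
The plan is to exploit the Tate decomposition of $M(X)$ coming from splitness in order to produce dual families of cycles in $\Ch(X)$.

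Since $X$ is split, by definition $M(X)\in\mathrm{CM}(F,\Lambda)$ decomposes as a finite direct sum of Tate motives, say $M(X)\cong\bigoplus_{k=1}^{r}\Lambda(n_k)$ for some integers $0\le n_k\le\dim X$. Such a decomposition is equivalent to specifying, for each $k$, morphisms $p_k\colon M(X)\to\Lambda(n_k)$ and $q_k\colon\Lambda(n_k)\to M(X)$ satisfying $p_j\circ q_k=\delta_{jk}\cdot\id$ together with $\sum_{k=1}^{r}q_k\circ p_k=\id_{M(X)}$.

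I would then interpret this motivic data as Chow cycles on $X$. Identifying $\Hom$-groups in $\mathrm{CM}(F,\Lambda)$ with the appropriate graded components of $\Ch(X\times\Spec F)=\Ch(X)$, the morphism $p_k$ is represented by a cycle $a_k\in\Ch^{n_k}(X)$ and $q_k$ by a cycle $b_k\in\Ch^{\dim X-n_k}(X)$. Under these identifications, the composition rule $p_j\circ q_k=\delta_{jk}\cdot\id$ translates into the dual-basis relation $\deg(a_j\cdot b_k)=\delta_{jk}$, and the completeness relation yields the diagonal decomposition $\Delta_X=\sum_k a_k\times b_k$ in $\Ch(X\times X)$.

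From this diagonal decomposition, applying $\Delta_X$ as a self-correspondence to an arbitrary class $\gamma\in\Ch(X)$ produces the expansion $\gamma=(\Delta_X)_*(\gamma)=\sum_k\deg(\gamma\cdot a_k)\,b_k$. Consequently, $\{a_k\}$ and $\{b_k\}$ form $\Lambda$-bases of $\Ch(X)$ which are dual with respect to $\mathfrak{b}$. Non-degeneracy is then immediate: if $\mathfrak{b}(\gamma,y)=0$ for every $y\in\Ch(X)$, then taking $y=a_k$ annihilates every coefficient of $\gamma$ in the $\{b_k\}$-expansion, so $\gamma=0$.

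The main obstacle I anticipate is purely bookkeeping: making the codimension shifts line up correctly between $\Hom(M(X),\Lambda(n_k))$ and $\Ch^{n_k}(X)$, and matching composition of correspondences with the degree of the intersection product on $X$. Once these standard identifications from \cite{EKM} are in place, the non-degeneracy of $\mathfrak{b}$ is a formal consequence of the existence of any Tate decomposition of $M(X)$.
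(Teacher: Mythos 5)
Your proposal is correct and follows essentially the same route as the paper: both extract from the Tate decomposition of $M(X)$ a diagonal expansion $\Delta_X=\sum_k a_k\times b_k$ with $\deg(a_j\cdot b_k)=\delta_{jk}$, and then conclude via $\gamma=(\Delta_X)_*(\gamma)=\sum_k\mathfrak{b}(\gamma,a_k)\,b_k$ that the radical of $\mathfrak{b}$ is trivial. You spell out slightly more explicitly how the $a_k,b_k$ arise from the morphisms $p_k,q_k$ of the motivic decomposition, but the core argument is identical.
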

\proof{
Since the motive of $X$ decomposes into a finite sum of Tate motives, we have the following decomposition for the diagonal class $\Delta \in {\Ch}_{\dim X}(X \times X)$:
$$\Delta = a_1\times b_1 + ... +a_n \times b_n \, ,$$
where $a_1, ... ,a_n$ and $b_1, ... ,b_n$ are the homogeneous elements in $\mathrm{Ch}(X)$, such that for any $i,j=1, ... ,n$ the degree $\deg(a_i \cdot b_j)\in \Lambda$ is $0$ for $i\neq j$ and $1$ for $i=j$.

Note that ${\dim}_{\Lambda} \Ch(X) = \rk M(X) = n < \infty$. Therefore, to prove the lemma it suffices to show that $\mathrm{rad} \, \mathfrak{b}= \{0\}$.
Suppose that $x \in \mathrm{rad} \, \mathfrak{b}$ (this means $\mathfrak{b}(x,y)=0$ for any $y \in \Ch(X)$). Then we have
$$\qquad\qquad\qquad\qquad x={\Delta}_*(x) = \sum^{n}_{i=1} \deg(x \cdot a_i) b_i = \sum^{n}_{i=1} \mathfrak{b}(x,a_i) b_i =0 \, . \qquad\qquad \qquad\qquad \!\! \qed$$
}

\begin{lemma} \label{lem}{Assume that the coefficient ring
$\Lambda$ is finite. Let $X$ be a variety satisfying the
nilpotence principle. Let $f \in \mathrm{End} (M(X))$ and $1_E =
f_E \in \mathrm{End}(M(X)_E)$ for some field extension $E/F$. Then
$f^n=1$ for some positive integer $n$.} \proof{Since $X$ satisfies
the nilpotence principle, we have $f=1+\varepsilon$, where
$\varepsilon$ is nilpotent. Let $n$ be a positive integer such
that ${\varepsilon}^{n}=0 = n \varepsilon$. Then
$f^{n^n}=(1+\varepsilon)^{n^n}=1$ because the binomial
coefficients $n^n \choose i$ for $i<n$ are divisible by $n$.\qed }
\end{lemma}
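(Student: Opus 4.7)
The plan is to use the nilpotence principle to reduce to a finiteness argument inside a small subring of $\End(M(X))$.

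First, set $\varepsilon := f - 1 \in \End(M(X))$. By hypothesis $\varepsilon_E = f_E - 1_E = 0$ in $\End(M(X)_E)$, so the nilpotence principle gives an integer $k \geq 1$ with $\varepsilon^k = 0$.

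Next, I would consider the subring $R := \Lambda[\varepsilon] \subseteq \End(M(X))$. Because $\varepsilon^k = 0$, this ring is generated as a $\Lambda$-module by $1, \varepsilon, \varepsilon^2, \ldots, \varepsilon^{k-1}$; since $\Lambda$ is finite by hypothesis, $R$ is a finite ring. The element $f = 1 + \varepsilon$ is a unit of $R$, with explicit inverse $\sum_{i=0}^{k-1}(-1)^i\varepsilon^i$, as follows from $(1+\varepsilon)\sum_{i=0}^{k-1}(-\varepsilon)^i = 1 - (-\varepsilon)^k = 1$. The group of units of a finite ring is finite, so $f$ has finite multiplicative order, i.e.\ $f^n = 1$ for some positive integer $n$.

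An alternative, more computational route is to expand $f^n = (1+\varepsilon)^n = \sum_{i=0}^{k-1}\binom{n}{i}\varepsilon^i$ and choose $n$ so that each term of positive degree dies. Since $\Lambda$ is finite, there is $N \geq 1$ annihilating $\Lambda$ additively, and it then suffices to pick $n$ with $N \mid \binom{n}{i}$ for all $1 \leq i < k$. The existence of such an $n$ reduces (via the Chinese remainder theorem over the primes dividing $N$) to Kummer's theorem on $p$-adic valuations of binomial coefficients, which is the only mildly nontrivial step; in fact one can take $n$ to be a sufficiently large power of each prime divisor of $N$. Both routes are elementary; the subring argument is the cleaner presentation, and I would expect no serious obstacle beyond the initial invocation of the nilpotence principle.
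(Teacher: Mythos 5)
Your subring argument is correct and is a genuinely cleaner route than the paper's. The paper sets $f = 1 + \varepsilon$, picks $n$ with $\varepsilon^n = 0 = n\varepsilon$ (using finiteness of $\Lambda$ for the second equality), and claims $f^{n^n} = 1$ on the grounds that $n \mid \binom{n^n}{i}$ for $1 \leq i < n$. That divisibility is true, but the paper offers no justification; verifying it requires something like Kummer's theorem applied prime-by-prime (the point being that for $p \mid n$ with $v_p(n)=a$, the bottom $na$ base-$p$ digits of $n^n$ vanish and $i < n$ forces at least $a$ carries). Your second, ``computational'' route makes exactly this implicit dependence explicit, which is an honest improvement on the paper's exposition even though it is essentially the same argument.

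Your first route sidesteps the arithmetic entirely: $R = \Lambda[\varepsilon]$ is a quotient of $\Lambda[t]/(t^k)$ and hence finite, $f = 1+\varepsilon$ is a unit with the explicit inverse $\sum_{i=0}^{k-1}(-\varepsilon)^i$, and an element of the finite group $R^\times$ has finite order. This uses only that $\Lambda$ is finite and $\varepsilon$ nilpotent, produces the conclusion without any claim about binomial coefficients, and would generalize unchanged to any finite coefficient ring without needing to track prime factorizations. The only thing the paper's choice buys is an explicit (if enormous) exponent $n^n$; your argument is nonconstructive in the exponent but conceptually tighter. Both are correct.
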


\section
{Main results}
\label{Main results}

Let $p$ be a positive prime integer. The coefficient ring
$\Lambda$ is $\mathbb{F}_p$ in this section. Let $F$ be a field.
Let $D$ be a central division $F$-algebra of degree $p^n$. We
write $X(p^m,D)$ for the generalized Severi-Brauer variety of
right ideals in $D$ of reduced dimension $p^m$ for
$m=0,1,\dots,n$.

\begin{lemma} \label{SB(1,D)}  Let $E/F$ be a splitting field
extension for $X=X(1,D)$. Then the subgroup of $F$-rational
cycles in $\mathrm{Ch}_{\mathrm{dim} X }(X_E \times X_E)$ is
generated by the diagonal class.
\end{lemma}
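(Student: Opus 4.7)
The proof will identify the subgroup of rational cycles with the image of a ring homomorphism out of $\End(M(X))$, and then cut this image down to one dimension using indecomposability of $M(X)$.

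Since $E$ splits $D$, $X_E\simeq \mathbb{P}_E^{p^n-1}$ and $M(X)_E\simeq \bigoplus_{i=0}^{p^n-1}\mathbb{F}_p(i)$. By the K\"unneth formula, $\Ch_{\dim X}(X_E\times X_E)$ has basis $\{h^i\times h^{p^n-1-i}\}_{i=0}^{p^n-1}$; viewed as $\End(M(X)_E)$ under correspondence composition, it is isomorphic to the ring of diagonal matrices $\mathbb{F}_p^{p^n}$, with the diagonal class $\Delta=\sum_i h^i\times h^{p^n-1-i}$ corresponding to the identity. The subgroup of $F$-rational cycles is then exactly the image $R$ of the scalar-extension ring homomorphism $\End(M(X))\to\End(M(X)_E)$.

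First I would apply the nilpotence principle for the projective homogeneous variety $X$ to conclude that the kernel of this map is a nil ideal, so $R\simeq\End(M(X))/N$ for some nil ideal $N$. Next, the classical indecomposability of $M(X)$ with $\mathbb{F}_p$-coefficients (for $X$ the Severi-Brauer variety of the $p$-primary division algebra $D$), combined with the Krull-Schmidt theorem stated above, implies that the finite-dimensional $\mathbb{F}_p$-algebra $\End(M(X))$ is local; hence its quotient $R$ modulo the nil ideal $N$ is also local. Since $R$ sits inside the reduced ring $\mathbb{F}_p^{p^n}$, it is a local reduced ring, hence a field.

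To finish, any field embedded as a subring of $\mathbb{F}_p^{p^n}$ must equal $\mathbb{F}_p$: projecting onto each coordinate gives a ring map to $\mathbb{F}_p$, each either zero or injective, and at least one is injective since the product of these maps is. Hence $R=\mathbb{F}_p$. Since the diagonal $\Delta\in R$ corresponds to the identity, we conclude $R=\mathbb{F}_p\cdot\Delta$. The main potential obstacle is the reliance on the classical indecomposability of $M(X)$ as an external input; a self-contained alternative would derive the locality of $\End(M(X))$ directly via the Severi-Brauer bundle structure of the projection $X\times X\to X$.
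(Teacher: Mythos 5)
Your proof is correct, but it takes a genuinely different and, within the logical architecture of this paper, inverted route. The paper proves the lemma directly from two elementary ingredients: Karpenko's computation that $\bar{\mathrm{Ch}}^i(X)=0$ for $i>0$, and the observation that the first projection $X\times X\to X$ is a projective bundle, which produces a base-change-compatible isomorphism $\mathrm{Ch}_{\dim X}(X^2)\simeq\mathrm{Ch}(X)$ and hence $\bar{\mathrm{Ch}}_{\dim X}(X^2)\simeq\bar{\mathrm{Ch}}^0(X)$, a one-dimensional space. Indecomposability of $M(X)$ is then obtained as Corollary~\ref{M(SB(1,D))}, i.e., \emph{downstream} of this lemma. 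You instead take indecomposability as the external input (Karpenko's Theorem~2.2.1), use Krull--Schmidt and the finiteness of $\Lambda$ to conclude that $\End(M(X))$ is local, then use the nilpotence principle plus finite-ring theory (local plus reduced plus finite implies field, and a field mapping to $\mathbb{F}_p^{p^n}$ by a unital ring map has order at most $p$) to pin down the image $R$ as $\mathbb{F}_p\cdot\Delta$. Every step in your argument holds, but in the paper's setting your reliance on Corollary~\ref{M(SB(1,D))} would be circular; it is only rescued by appealing to Karpenko's earlier proof of indecomposability as a black box. The paper's more economical argument buys a cleaner logical chain (one Karpenko input, two consequences: this lemma and then indecomposability), while your structural argument is a nice illustration of how indecomposability, nilpotence, and finite-ring theory together control the image of rational cycles; indeed your closing remark correctly identifies the projective bundle structure of $X\times X\to X$ as the ingredient that would make the argument self-contained, which is precisely what the paper uses.
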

\begin{proof}
By \cite[Proposition 2.1.1]{K-i}, we have $\mathrm{\bar{Ch}}^i(X) =
0$ for $i>0$. Since the (say, first) projection $X^2 \rightarrow
X$ is a projective bundle, we have a (natural with respect to the
base field change) isomorphism $\mathrm{Ch}_{\mathrm{dim} \,
X}(X^2) \simeq \mathrm{Ch}(X)$. Passing to $\mathrm{\bar{Ch}}$, we
get an isomorphism $\mathrm{\bar{Ch}}_{\mathrm{dim} \, X}(X^2)
\simeq \mathrm{\bar{Ch}}(X) = \mathrm{\bar{Ch}}^0(X)$ showing that
$\mathrm{dim}_{\mathbb{F}_p}\mathrm{\bar{Ch}}_{\mathrm{dim} \,
X}(X^2)=1$. Since the diagonal class in
$\mathrm{\bar{Ch}}_{\mathrm{dim} \, X}(X^2)$ is non-zero, it
generates all the group.
\end{proof}

\begin{cor} \label{M(SB(1,D))} {\upshape (cf. \cite[Theorem
2.2.1]{K-i}).}{ The motive with coefficients in $\mathbb{F}_p$ of
the Severi-Brauer variety $X=X(1,D)$ is indecomposable. }
\end{cor}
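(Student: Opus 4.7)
The plan is to show that $\End(M(X))$ contains no idempotent other than $0$ and $\id$. Let $\pi \in \End(M(X))$ be an idempotent. Since $\End(M(X)) = \Ch_{\dim X}(X \times X)$, the scalar extension gives a class $\pi_E \in \Ch_{\dim X}(X_E \times X_E)$ that lies in the subgroup of $F$-rational cycles. By Lemma \ref{SB(1,D)}, this subgroup is generated over $\mathbb{F}_p$ by the diagonal class $\Delta$, so I can write $\pi_E = c \cdot \Delta$ for a unique $c \in \mathbb{F}_p$.

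Next I would use idempotency after extension: since $\pi^2 = \pi$, also $\pi_E^2 = \pi_E$. As $\Delta$ is the identity endomorphism of $M(X_E)$, this reads $c^2 \Delta = c \Delta$, forcing $c \in \{0,1\}$.

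Then I split into the two cases and descend to $F$ using the nilpotence principle (applicable because $X$ is projective homogeneous). If $c = 0$, then $\pi_E = 0$, so $\pi$ is nilpotent; combined with $\pi = \pi^2$, this gives $\pi = 0$. If $c = 1$, then $\pi_E = \id$, and Lemma \ref{lem} yields a positive integer $n$ with $\pi^n = \id$; combined with $\pi^2 = \pi$ (which iterates to $\pi^n = \pi$), this gives $\pi = \id$.

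This shows the only idempotents in $\End(M(X))$ are $0$ and $\id$, hence $M(X)$ is indecomposable. There is no real obstacle: the corollary is essentially a direct packaging of Lemma \ref{SB(1,D)} together with the nilpotence principle via Lemma \ref{lem}; the only point to keep in mind is that degree-zero correspondences in the graded motivic category live exactly in $\Ch_{\dim X}(X \times X)$, so the rationality constraint from Lemma \ref{SB(1,D)} applies verbatim.
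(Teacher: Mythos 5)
Your proof is correct and follows essentially the same route as the paper: reduce to showing $\End(M(X))=\Ch_{\dim X}(X\times X)$ has no nontrivial idempotents, use Lemma \ref{SB(1,D)} to pin down $\pi_E$ up to a scalar multiple of $\Delta$, and then split into the two cases via the nilpotence principle and Lemma \ref{lem}. You spell out the step $c^2 = c$ that the paper leaves implicit, but the argument is the same.
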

\begin{proof} To prove that our motive is indecomposable it is enough to
show that $\mathrm{End}(M(X))$= $\mathrm{Ch}_{\mathrm{dim X} }(X
\times X)$ does not contain nontrivial projectors. Let $\pi \in
\mathrm{Ch}_{\mathrm{dim} X }(X \times X)$ be a projector. By
Lemma \ref{SB(1,D)}, $\pi_E$ is zero or equal to $1_E$. Since $X$
satisfies the nilpotence principle, $\pi$ is nilpotent in the
first case, but also idempotent, therefore $\pi$ is zero. Lemma
\ref{lem} gives us $\pi = 1$ in the second case.
\end{proof}

Nikita Karpenko proved the motivic indecomposability
of generalized Severi-Brauer varieties also in the case $p=2$,
$m=1$.

\begin{thm} \label{M(SB(2,D))} {\upshape (cf. \cite[Theorem 4.2]{upper})}. {Let $D$ be a central division
$F$-algebra of degree $2^n$ with $n \geq 1$. Then the motive with
coefficients in $\mathbb{F}_2$ of the variety $X(2,D)$ is
indecomposable.}
\end{thm}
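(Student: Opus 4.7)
The plan is to mirror the proof of Corollary \ref{M(SB(1,D))}. Set $X=X(2,D)$ and fix a splitting field extension $E/F$ for $X$, so that $X_E$ is isomorphic to the Grassmannian $\mathrm{Gr}(2,2^n)$. Let $\pi \in \End(M(X)) = \Ch_{\dim X}(X \times X)$ be a projector. Since $X$ is projective homogeneous, it satisfies the nilpotence principle, so Lemma \ref{lem} shows that it suffices to prove $\pi_E \in \{0,1_E\}$: in the first case $\pi$ would be idempotent and nilpotent, hence $0$, and in the second case $\pi^N = 1$ for some $N$. The whole task therefore reduces to classifying the idempotents of the $F$-rational middle-dimensional subgroup $\BCh_{\dim X}(X \times X)$ under the composition product of correspondences.

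Next, I would determine $\BCh_{\dim X}(X \times X)$ explicitly. Unlike the Severi--Brauer case of Corollary \ref{M(SB(1,D))}, the ring $\BCh(X)$ is no longer concentrated in codimension zero: already the first Chern class of the determinant line bundle on $X$ produces a nontrivial $F$-rational divisor. The precise $\mathbb{F}_2$-structure of $\BCh(X(2,D))$ is known from Karpenko's computations, and rests on the action of Steenrod operations on $\Ch(\mathrm{Gr}(2,2^n); \mathbb{F}_2)$ together with the $2$-adic divisibility properties of the binomial coefficients governing the Pieri/Giambelli rules on the Grassmannian. Combined with the K\"unneth-type decomposition of $\Ch(X_E \times X_E)$, this yields a small explicit $\mathbb{F}_2$-basis for $\BCh_{\dim X}(X \times X)$ containing the diagonal $\Delta$ together with pairs built from dual rational Schubert cycles.

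Finally, a general element $\pi$ of $\BCh_{\dim X}(X \times X)$ is expanded in this basis, and the relation $\pi \circ \pi = \pi$ is translated into a system of polynomial equations modulo $2$ via the intersection-theoretic formula for composition of correspondences, invoking the non-degenerate bilinear form of Lemma \ref{non-deg} to dualize. The main obstacle is to show that these equations force $\pi \in \{0, \Delta\}$; the hypothesis that $D$ is a division algebra of degree exactly $2^n$ enters crucially here, restricting the list of $F$-rational Schubert cycles sufficiently to rule out any nontrivial idempotent. Once this final $p=2$ specific calculation is carried out, Lemma \ref{lem} combined with the nilpotence principle concludes the argument along the same lines as in the Severi--Brauer case.
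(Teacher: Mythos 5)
The paper does not actually prove this theorem; the marker ``cf.\ \cite[Theorem 4.2]{upper}'' signals that the statement and its proof are due to Karpenko and are simply being quoted. So there is no proof in the paper for your proposal to be measured against, and your sketch should be assessed as an attempted independent argument.

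Your opening reduction is sound: by the nilpotence principle and Lemma \ref{lem}, showing that every idempotent of $\BCh_{\dim X}(X\times X)$ is $0$ or $\Delta_E$ does prove indecomposability, exactly as in the proof of Corollary \ref{M(SB(1,D))}. The difficulty is that, in the Severi--Brauer case, this was immediate because Lemma \ref{SB(1,D)} shows $\BCh_{\dim X}(X^2)$ is one-dimensional; for $X(2,D)$ this group is not small and no analogue of Lemma \ref{SB(1,D)} is available, so the plan breaks precisely where the work begins. You acknowledge this yourself: determining an explicit basis of $\BCh_{\dim X}(X\times X)$, writing $\pi\circ\pi=\pi$ as equations over $\mathbb{F}_2$, and showing these equations have only trivial solutions is flagged as ``the main obstacle'' and is never carried out. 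In fact there is no reason to expect this brute-force classification of idempotents of a large commutative-looking subring to be tractable. Karpenko's actual proof avoids it entirely: using the structure theorem on upper motives (his Theorem 3.8, used repeatedly in this paper), any proper direct summand of $M(X(2,D))$ would have to be a shift $M(X(1,D))(k)$, and one then rules these out by a targeted computation of rational cycles (in the spirit of Theorem \ref{main} and Proposition \ref{Ch(XY)} here, where the binomial-coefficient argument of Corollary \ref{decomposability} fails exactly when $p=2$, $m=1$). Your proposal therefore contains a genuine gap: the reduction is correct, but the decisive step is deferred, and the direct idempotent-classification route you outline is not the one that works.
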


Corollary \ref{decomposability} of the following main theorem will show that Corollary \ref{M(SB(1,D))} and Theorem
\ref{M(SB(2,D))} give us the only cases when the motive of generalized Severi-Brauer variety is indecomposable.

\begin{thm}
\label{main}
Let $D$ be a central division $F$-algebra of
degree $p^n$ with $n \geq 1$. Let $X$ be the Severi-Brauer variety $X(1,D)$ and $Y$ be a variety satisfying nilpotence principle, such that $Y$ is split over the function field of $X$. Then for any integer $k$ the number of copies $M(X)(k)$  in the complete motivic decomposition of $Y$ is equal to
$\dim_{\mathbb{F}_p} f_* \, \mathrm{\bar{Ch}}_{\dim Y - k}(X \times Y)$, where $f$ is a projection onto the second factor.
\end{thm}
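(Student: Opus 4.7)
The plan has three parts: (i) identify $f_*$ motivically as post-composition with the fundamental-class morphism $[X]\colon M(X)\to\Lambda$; (ii) use the projections onto the $M(X)(k)$-summands of $M(Y)$ to obtain a lower bound; and (iii) establish the matching upper bound via a scalar-valued ``coefficient'' map into $\mathbb{F}_p^r$.

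\emph{Identification and setup.} Under the identifications $\mathrm{Ch}_{\dim Y-k}(X\times Y)=\Hom(M(Y),M(X)(k))$ (after swap of factors) and $\mathrm{Ch}^k(Y)=\Hom(M(Y),\Lambda(k))$, a direct composition-of-correspondences computation gives $f_*\beta=[X]\circ\beta$, where $[X]\colon M(X)\to\Lambda$ is represented by the fundamental class (equivalently, the projection onto the top Tate summand over a splitting field). By the Krull--Schmidt theorem, fix a complete motivic decomposition $M(Y)=\bigoplus_{i=1}^{r}M(X)(k)\oplus Q$ with $Q$ having no summand isomorphic to $M(X)(k)$; write $\iota_i$ and $\pi_i$ for the associated split inclusions and projections.

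\emph{Lower bound.} The cycles $\bar\pi_i\in\bar{\mathrm{Ch}}_{\dim Y-k}(X\times Y)$ satisfy $(f_*\bar\pi_i)\circ\iota_j=\delta_{ij}[X]$, and since $[X]$ generates $\bar{\mathrm{Ch}}^0(X)=\mathbb{F}_p\cdot[X]$ (cf.\ Lemma \ref{SB(1,D)}), the classes $f_*\bar\pi_i$ are $\mathbb{F}_p$-linearly independent; hence $\dim_{\mathbb{F}_p}f_*\bar{\mathrm{Ch}}_{\dim Y-k}(X\times Y)\ge r$.

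\emph{Upper bound.} For any $\beta\in\bar{\mathrm{Ch}}_{\dim Y-k}(X\times Y)$, lift to $\tilde\beta\in\Hom_F(M(Y),M(X)(k))$. By Lemma \ref{SB(1,D)} and the nilpotence principle (Lemma \ref{lem}), each endomorphism $\tilde\beta\circ\iota_i\in\End_F(M(X))$ decomposes uniquely as $c_i\Delta+n_i$ with $c_i\in\mathbb{F}_p$ and $n_i$ nilpotent, and $[X]\circ n_i=0$ (since $(n_i)_E=0$ while $\mathbb{F}_p\cdot[X]$ injects into $\mathrm{Ch}^0(X_E)$). Hence $(f_*\beta)\circ\iota_i=c_i[X]$, giving a well-defined surjective linear map $\Phi\colon f_*\bar{\mathrm{Ch}}_{\dim Y-k}(X\times Y)\twoheadrightarrow\mathbb{F}_p^r$. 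The principal difficulty is the injectivity of $\Phi$: when all $c_i=0$, one may adjust $\tilde\beta$ by the nilpotent correction $\sum_j n_j\circ\pi_j$ (which leaves $f_*\tilde\beta$ unchanged because $[X]\circ n_j=0$) to arrange $\tilde\beta\circ\iota_i=0$ for every $i$, so that $\tilde\beta$ factors as $\gamma\circ\pi_Q$ for some $F$-rational $\gamma\colon Q\to M(X)(k)$; one must then verify that the $E$-scalar-extension of $[X]\circ\gamma$ vanishes in $\Hom_E(Q_E,\Lambda(k))$. This is the crux of the proof: by the indecomposability of $M(X)$ (Corollary \ref{M(SB(1,D))}) and the local structure of $\End_F(M(X))$, a nonvanishing top component would produce, via the nilpotence principle and a Fitting-type construction, an idempotent in $\End(M(Y))$ whose image is a copy of $M(X)(k)$ inside $Q$, contradicting the defining property of $Q$.
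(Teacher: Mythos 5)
Your identification of $f_*$ with post-composition by the fundamental-class morphism $[X]\colon M(X)\to\Lambda$ is correct and clean, and your ``lower bound'' (linear independence of the classes $f_*\bar\pi_i$, hence $\dim V\geq r$) is a mild repackaging of the first half of the paper's argument. The problem is the ``upper bound.'' You correctly isolate the crux --- showing that if $\tilde\beta$ factors through $Q$ (the complement of $M(X)(k)^{\oplus r}$) then $f_*\beta$ vanishes --- but you do not actually prove it. The sentence ``a nonvanishing top component would produce, via the nilpotence principle and a Fitting-type construction, an idempotent in $\End(M(Y))$ whose image is a copy of $M(X)(k)$ inside $Q$'' contains the real difficulty: to run a Fitting-lemma argument you must first produce an $F$-rational morphism $\delta\colon M(X)(k)\to Q$ with $\mult(\gamma\circ\delta)\neq 0$, and nothing in your write-up explains where $\delta$ comes from. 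That is precisely the step that requires the two analytic ingredients of the paper's proof, neither of which you invoke: (a) the non-degeneracy of the intersection form on $\Ch(Y_E)$ for the split variety $Y_E$ (Lemma~\ref{non-deg}), which produces the dual class over $E$, and (b) the surjectivity of the restriction-to-generic-fiber map $\Ch(X\times Y)\to\Ch(Y_{F(X)})$ (\cite[Corollary 57.11]{EKM}), which lifts that dual class to an $F$-rational correspondence. Without these, the claimed contradiction is not established.

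Compare with the paper's route, which avoids the injectivity question altogether: starting from a basis $b_1,\dots,b_{r'}$ of $V$, it first chooses $F$-rational $\beta_i$ hitting the $b_i$, uses (a) to find a dual system $b_i^*$ in $\Ch(Y_E)$, uses (b) to lift the $b_i^*$ to $F$-rational $\alpha_i$, and then invokes Lemma~\ref{SB(1,D)} (which says $\bar\Ch_{\dim X}(X^2)$ is one-dimensional, generated by the diagonal) to upgrade the equality of multiplicities $\mult((\beta_j)_E\circ(\alpha_i)_E)=\delta_{ij}$ to the equality of correspondences $(\beta_j\circ\alpha_i)_E=\delta_{ij}\Delta_E$, after which Lemma~\ref{lem} descends this to an honest splitting over $F$. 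That directly exhibits $M(X)(k)^{\oplus r'}$ as a summand, giving $r\geq r'$ by Krull--Schmidt. Your $\Phi$-injectivity plan can probably be pushed through, but only by importing essentially this same machinery, so as written the proposal has a genuine gap rather than a self-contained alternative. (Two small notational remarks: $\bar\Ch^0(X)=\mathbb{F}_p\cdot[X]$ is elementary and is not what Lemma~\ref{SB(1,D)} asserts --- that lemma concerns $\bar\Ch_{\dim X}(X\times X)$; and your reduction ``$[X]\circ\gamma$ vanishes over $E$'' should be stated as an identity in $\Hom_E(Q_E,\Lambda(k))\cong\Ch^k$ of the appropriate summand of $Y_E$, since $\gamma$ lands in $M(X)(k)$, not in $M(Y)$.)
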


\begin{proof}
We fix an integer $k$ and we note the motive $M(X)(k)$ simply by $M$. Let $r$ be the number of copies of $M$  in the complete motivic decomposition of $Y$. We note $V:= f_* \, \mathrm{\bar{Ch}}_{\dim Y - k}(X \times Y)$ and $r':= \dim_{\mathbb{F}_p} V$. We want to show that $r= r'$.

Let $A_1, ... , A_m$ and $B_1, ... , B_n$ be the motives. We recall that a morphism between the motives $\bigoplus_{i=1}^{m} A_i$ and $\bigoplus_{j=1}^{n} B_j$ is given by an $n \times m$-matrix of morphisms $A_i \rightarrow B_j$. The composition of morphisms is the matrix multiplication.

The motive $M^{\oplus r}$
is a summand of the motive $M(Y)$. Therefore there exist two morphisms $\alpha = (\alpha_1, ..., \alpha_r)^t \in \Hom (M^{\oplus r}, M(Y))$ and
$\beta = (\beta_1, ..., \beta_r) \in \Hom (M(Y), M^{\oplus r})$, such that $$\beta \circ \alpha = (\beta_j \circ \alpha_i)_{1\leq i, j \leq r} = (\delta_{i,j})_{1\leq i, j \leq r} \, ,$$
where $(\delta_{i,j})_{1\leq i, j \leq r}$ is the identity morphism in $\Hom(M^{\oplus r}, M^{\oplus r})$ (that is $\delta_{i,j}$ is zero if $i \neq j$ and $\delta_{i,j}$ is the diagonal class $\Delta$ in $\Corr_0(X, X)$ if $i=j$).

Let $E =F(X)$, then $E/F$ is a splitting field extension for the varieties $X$ and $Y$ (here we use the condition of the theorem) and $X_E \simeq \mathbb{P}^{d}$, where $d=p^n-1$. We know that $\Delta_E = \sum_{i=0}^{d} h^i\times h^{d-i}$, where $h$ is the hyperplane class in $\mathrm{Ch}^{1}(X_E)$.
For any $1 \leq i \leq r$ we have $$ {({\beta}_i)}_E \circ {({\alpha}_i)}_E = {({\delta}_{i,i})}_E = {\Delta}_E =h^0\times h^d + \sum_{i=1}^{d} h^i\times h^{d-i} =
[X_E]\times [pt] + \sum_{i=1}^{d} h^i\times h^{d-i}\, ,$$
where $[pt]$ is the class of a rational point in $\mathrm{Ch}(X_E)$. Therefore the correspondences ${\beta}_i \in \mathrm{Ch}_{\dim Y-k} (Y_E \times X_E)$ and ${\alpha}_i \in \mathrm{Ch}_{d+k} (X_E \times Y_E) $  have to be of the following form:
\begin{equation} \label{beta}
({\beta}_i)_E = b_i \times [pt] + \ldots \, ,
\end{equation}
where $b_i \in \mathrm{Ch}^{k}(Y_E)$ is non-zero and where ``\ldots" stands for a linear
 combination of only those terms whose first factor has
 codimension $> k$,
\begin{equation} \label{alpha}
({\alpha}_i)_E =[X_E] \times b_i^* + \ldots \, ,
\end{equation}
 where $b_i^* \in \mathrm{Ch}_{k}(Y_E)$ is such that $\mathrm{deg}(b_i
\cdot b_i^*) =1$ and where ``\ldots" stands for a linear
combination of only those terms whose second factor has
dimension $> k$.

For any $i \neq j$ we have ${({\beta}_j)}_E \circ {({\alpha}_i)}_E  = 0$, this implies that $\mathrm{deg}(b_j
\cdot b_i^*) =0$. Therefore the system of vectors $\{b^*_1, ..., b^*_r\}$ from the vector space $\Ch(Y_E)$ is dual to the system of vectors $\{b_1, ..., b_r\}$ with respect to the bilinear form $\mathfrak{b}:\mathrm{Ch}(Y_E)\times
\mathrm{Ch}(Y_E) \rightarrow \mathbb{F}_p$, \mbox{$\mathfrak{b}(x_1,x_2)=
\mathrm{deg}(x_1 \cdot x_2)$}. It follows that the vectors $b_1, ..., b_r$ are linearly independent. Since $b_i =f_*(({\beta}^t_i)_E)$, then $b_i \in V$  for any $1 \leq i \leq r$. Therefore $r \leq r'$.

Let now $b_1, ..., b_{r'}$ be a basis of $V$. We want to show that $M^{\oplus r'}$ is a motivic summand of $Y$. By the definition of $V$, there exist correspondences $\beta_1, ..., \beta_{r'} \in \Ch_{\dim Y-k}(Y \times X)$ of the form (\ref{beta}), such that $b_i = f_*(({\beta}^t_i)_E)$. Since the variety $Y_E$ is split, then by Lemma \ref{non-deg} the bilinear form $\mathfrak{b}$ is non-degenerate. It follows that there exists a system of vectors $\{b^*_1, ..., b^*_{r'}\}$ from the vector space $\Ch(Y_E)$, which is dual to the system of vectors $\{b_1, ..., b_{r'}\}$. For any $1 \leq i \leq r'$ we construct the correspondence ${\alpha}_i \in \mathrm{Ch}_{d+k} (X \times Y)$, such that ${({\alpha}_i)}_E$ is of the form (\ref{alpha}), by the following way.  The pull-back homomorphism
$$g:\mathrm{Ch}(X\times Y) \rightarrow \mathrm{Ch}(Y_{F(X)}) =
\mathrm{Ch}(Y_E)$$ with respect to the morphism
$Y_{F(X)}=(\mathrm{Spec}\, F(X))\times Y \rightarrow X \times Y$
given by the generic point of $X$ is surjective by \cite[Corollary
57.11]{EKM}. We define ${\alpha}_i \in \mathrm{Ch}(X \times Y)$ as a cycle
whose image in $\mathrm{Ch}(Y_E)$ under the surjection $g$ is $b^*_i$.
We have ${({\alpha}_i)}_E =  [X_E] \times
b^*_i + \ldots \,$, so ${({\alpha}_i)}_E$ is of the form (\ref{alpha}).

The $r'$-tuples $(\alpha_1, ..., \alpha_{r'})^t$ and $(\beta_1, ..., \beta_{r'})$ give us respectively two morphisms $\alpha \in \Hom (M^{\oplus r'}, M(Y))$ and $\beta \in \Hom (M(Y), M^{\oplus r'})$. By the construction of $\alpha$ and $\beta$, the matrix $(\mult ({(\beta_j)}_E \circ {(\alpha_i)}_E))_{1\leq i, j \leq r}$ is an identity matrix. Then, by Lemma \ref{SB(1,D)}, ${\beta}_E \circ {\alpha}_E = ({(\beta_j)_E} \circ {(\alpha_i)}_E)_{1\leq i, j \leq r} = 1_E$, where we note simply by $1$ the identity morphism $({(\delta_{i,j})})_{1\leq i, j \leq r}$ in $\Hom(M^{\oplus r}, M^{\oplus r})$. Let $\mathfrak{X}$ be a disjoint union of $r'$ copies of $X$, then $\Hom(M(\mathfrak{X}), M(\mathfrak{X}))= \Hom(M^{\oplus r'}, M^{\oplus r'})$. According to \cite[Theorem 92.4]{EKM} the variety $\mathfrak{X}$ satisfies the nilpotence principle.  By Lemma \ref{lem}, there exist a positive integer $n$, such that ${(\beta \circ \alpha)}^n = 1$ (we apply Lemma \ref{lem} to the variety $\mathfrak{X}$ and to the morphism $\beta \circ \alpha \in \Hom(M(\mathfrak{X}), M(\mathfrak{X}))$). The morphisms $\alpha$ and ${(\beta \circ \alpha)}^{n-1} \circ \beta$ give the isomorphism between the motive $M^{\oplus r'}$ and a direct summand of $M(Y)$. Therefore $r'\geq r$ and then finally $r' = r$.
\end{proof}

\begin{prop}
\label{Ch(XY)}
Let $D$ be a central division $F$-algebra of
degree $p^n$ with $n \geq 1$. Let $X$ and $Y$ be respectively the varieties $X(1,D)$ and $X(p^m,D)$, $0 \leq m <n$. Let $E/F$ be a splitting field extension for the variety $X$, let $T_1$
and $T_{p^m}$ be the tautological bundles of rank $1$ and $p^m$ on
$X_E$ and $Y_E$ respectively. Then the subring of $F$-rational cycles in $\mathrm{Ch}(X_E \times Y_E)$ is generated by the Chern classes of the vector bundle
$T_1 \boxtimes (-T_{p^m})^{\vee}$ (we lift the bundles $T_1$ and $T_{p^m}$ on $X_E \times Y_E$ and then take a product).
\end{prop}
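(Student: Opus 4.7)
The plan is to prove two inclusions: every Chern class of $T_1 \boxtimes (-T_{p^m})^\vee$ lies in the subring of $F$-rational cycles of $\Ch(X_E \times Y_E)$, and conversely every $F$-rational cycle is a polynomial in these Chern classes.

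For descent, I would argue as follows. Over $E$, a choice of splitting $D_E = \End_E(V)$ with $\dim_E V = p^n$ identifies $X_E$ with $\PP(V)$ and $Y_E$ with the Grassmannian $\operatorname{Gr}(p^m, V)$; both tautological bundles $T_1 \subset V \otimes \cO_{X_E}$ and $T_{p^m} \subset V \otimes \cO_{Y_E}$ are naturally $\GL(V)$-equivariant, and the scalar center $\mathbb{G}_m \subset \GL(V)$ acts on each with weight $1$. Consequently the external tensor product $T_1 \boxtimes T_{p^m}^\vee$ on $X_E \times Y_E$ carries central weight zero and hence a canonical $\PGL(V) = \Aut(D_E)$-equivariant structure. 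Twisting by the cocycle representing $D \in H^1\bigl(F, \PGL(V)\bigr)$ produces a genuine rank-$p^m$ vector bundle on $X \times Y$ defined over $F$, so all its Chern classes are $F$-rational. Since the virtual bundles $T_1 \boxtimes T_{p^m}^\vee$ and $T_1 \boxtimes (-T_{p^m})^\vee$ differ only by a formal dualization in $K_0(X_E \times Y_E)$, they generate the same subring of Chern classes.

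For generation, I would view the projection $p \colon X \times Y \to Y$ as the Severi-Brauer bundle of the Azumaya $\cO_Y$-algebra $\mathcal{D}_Y := D \otimes_F \cO_Y$ and exploit a relative projective-bundle-type description: every element of $\Ch(X_E \times Y_E)$ is a polynomial over $\Ch(Y_E)$ in $h = c_1(T_1^\vee)$, subject to $h^{p^n} = 0$. Pulling back along the generic point of $X$ and combining with Lemma \ref{SB(1,D)}, together with the known description of $\BCh(Y)$ in terms of Chern classes of $T_{p^m}$, one identifies $F$-rational cycles with precisely those polynomials in $h$ and $c_i(T_{p^m})$ arising from the Chern classes of $T_1 \boxtimes T_{p^m}^\vee$. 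A dimension count against the Hilbert series of $\Ch(X_E \times Y_E)$ should then confirm that the subring generated by these Chern classes has the correct $\mathbb{F}_p$-rank and so exhausts the full rational subring.

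The main obstacle is the generation step: pinning down exactly which polynomials in $h$ and the $c_i(T_{p^m})$ are forced to be $F$-rational. The descent direction is essentially formal once one observes the weight-zero condition. But ruling out additional $F$-rational cycles beyond the subring generated by the $c_i(T_1 \boxtimes (-T_{p^m})^\vee)$ requires either a careful Schubert-calculus argument in $\Ch(X_E \times Y_E)$, or an appeal to Panin's computation of the $K$-theory (and Chow ring) of twisted flag varieties, which explicitly tracks the rational classes in terms of equivariant $K$-theory of the split model.
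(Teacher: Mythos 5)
Your descent direction is essentially fine: observing that $T_1 \boxtimes T_{p^m}^\vee$ has trivial central weight, so it is $\PGL(V)$-equivariant and descends to a genuine bundle on $X\times Y$ over $F$, does establish $F$-rationality of its Chern classes. But the paper does not need the cocycle-twisting formalism; it notes directly that $X\times Y$, viewed over $X$ via the \emph{first} projection, is the Grassmann bundle $G_r(Tav)$ for the tautological bundle $Tav$ on $X$ (this is \cite[Proposition 4.3]{I-K}, with $r=p^n-p^m$), and that the tautological subbundle $T$ on $G_r(Tav)$ is by construction already defined over $F$, with $T_E \simeq T_1\boxtimes(-T_{p^m})^\vee$.

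The real gap is in your generation step, and it is a structural one. You propose to fibre $X\times Y$ over $Y$ (as a Severi--Brauer bundle), write cycles as polynomials in $h$ over $\Ch(Y_E)$, and then invoke ``the known description of $\BCh(Y)$ in terms of Chern classes of $T_{p^m}$.'' No such description is available: the ring $\BCh(Y)$ of $F$-rational cycles on the generalized Severi--Brauer variety $Y=X(p^m,D)$ is precisely what this whole circle of results is trying to understand (and, e.g., the tautological bundle $T_{p^m}$ itself is not $F$-rational since $\Pic(Y)\to\Pic(Y_E)$ is not surjective). So the projection to $Y$ leads you in a circle. The paper resolves this by projecting to $X$ instead, where the needed input is trivial: by Karpenko \cite[Proposition~2.1.1]{K-i}, $\BCh(X)=\BCh^0(X)=\F_p\cdot[X_E]$. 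Combined with the Grassmann-bundle formula \cite[Example~14.6.6]{F} --- which says $\Ch(G_r(Tav))$ is generated as a $\Ch(X)$-algebra by $c_0(T),\dots,c_r(T)$, and this presentation is natural under base change --- one gets immediately that $\BCh(X\times Y)$ is generated over $\F_p$ by the $c_i(T_E)$, which are the Chern classes of $T_1\boxtimes(-T_{p^m})^\vee$. Your Hilbert-series dimension count is also not needed and could not be made to close the gap, since bounding $\dim_{\F_p}\BCh(X\times Y)$ in each degree would again require prior knowledge of the rational subring.

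In short: project to the factor whose rational Chow ring you \emph{do} know ($X$, where it is one-dimensional), not the factor whose rational Chow ring you are trying to compute ($Y$). With that substitution the argument becomes a three-line application of the Grassmann-bundle formula plus $\BCh(X)=\F_p$.
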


\begin{proof}

Let $Tav$ be the tautological vector bundle on $X$. The product $X
\times Y$ considered over $X$ (via the first projection) is
isomorphic (as a scheme over $X$) to the Grassmann bundle
$G_r(Tav)$ of $r$-dimensional subspaces in $Tav$ (cf.
\cite[Proposition 4.3]{I-K}), where $r=p^n-p^m$. Let $T$ be the tautological
$r$-dimensional vector bundle on $G_r(Tav)$. By \cite[Example 14.6.6]{F}, the Chow ring $\Ch(G_r(Tav))$
as an algebra over $\Ch(X)$ is generated by Chern classes $c_0(T), c_1(T), ..., c_r(T)$.

 By \cite[Proposition 2.1.1]{K-i}, we have $\mathrm{\bar{Ch}}(X) = \mathrm{\bar{Ch}}^0(X) = \Z \cdot [X_E]$.
Therefore the Chow ring $\bar{\Ch}(X\times Y) \simeq \bar{\Ch}(G_r(Tav))$ is generated (as a ring) by Chern classes $c_0(T_E), ..., c_r(T_E)$.
Since there exists an isomorphism (cf. \cite[Proposition 4.3]{I-K}): $T_E \simeq T_1 \boxtimes
(-T_{p^m})^{\vee}$, we are done.

\end{proof}

\begin{cor}
\label{decomposability}
The motive with coefficients in
$\mathbb{F}_p$ of the variety $X(p^m,D)$ is decomposable for
$p=2$, $1 < m <n$ and for $p>2$, $0<m<n$. In these cases
$M(X(1,D))(k)$ is a summand of $M(X(p^m,D))$ for $ 2 \leq k \leq
p^n-p^m$.
\end{cor}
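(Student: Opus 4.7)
The plan is to apply Theorem \ref{main} with $Y = X(p^m, D)$. Its hypotheses hold: $Y$ is projective homogeneous (hence satisfies the nilpotence principle), and $D$ splits over $F(X) = F(X(1,D))$, so $Y$ splits there as well. Thus the number of copies of $M(X)(k)$ in the complete motivic decomposition of $M(Y)$ equals $\dim_{\F_p} f_* \BCh_{\dim Y - k}(X \times Y)$, where $f$ is the projection onto the second factor. To prove both assertions of the corollary, it suffices to exhibit, for every $k$ in the range $[2, p^n - p^m]$, an $F$-rational cycle in $\BCh_{\dim Y - k}(X \times Y)$ with nonzero $f$-pushforward. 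Decomposability follows from the existence of any such summand with $k \geq 1$: the motive $M(Y)$ has a codim-$0$ Tate summand after splitting, while $M(X)(k)$ contributes none in codimension below $k$, so such a summand cannot exhaust $M(Y)$.

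Under the identification $\Ch(X_E \times Y_E) = \Ch(Y_E)[h]/(h^{p^n})$, where $h$ is the hyperplane class on $X_E \cong \PP^{p^n - 1}$, the pushforward $f_*$ is extraction of the coefficient of $h^{p^n - 1}$. Proposition \ref{Ch(XY)} identifies the $F$-rational subring as generated by the Chern classes $c_i(T_E)$ of the bundle $T_E = T_1 \boxtimes (-T_{p^m})^\vee$ of rank $r := p^n - p^m$. Treating the Chern roots of $T_E$ as $z_j - h$ where the $z_j$ are the Chern roots of $(-T_{p^m})^\vee$, a direct computation gives
\[
c_i(T_E) = \sum_{j=0}^{i} (-h)^j \binom{r - i + j}{j} c_{i-j}\bigl((-T_{p^m})^\vee\bigr),
\]
so any polynomial in the $c_i(T_E)$ decomposes explicitly in powers of $h$ with coefficients in $\Ch(Y_E)$.

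For each $k$ in the given range, the plan is to choose a product $\prod_\alpha c_{i_\alpha}(T_E)$ of total codimension $p^n - 1 + k$ and to isolate the coefficient of $h^{p^n - 1}$. Lucas' theorem applied to the base-$p$ expansion of $r$ -- whose digits are $0$ in positions below $m$ and $p - 1$ in positions $m$ through $n - 1$ -- controls which binomial coefficients in the expansion survive modulo $p$; the excluded cases $m = 0$ (any prime) and $(p, m) = (2, 1)$ are precisely those in which every natural product degenerates mod $p$. Outside these, a workable family of candidates is $c_s(T_E) \cdot c_r(T_E)$ with $s = p^n - 1 + k - r$, in which one can pick out a term whose binomial coefficient is manifestly nonzero (for instance $\binom{r - 1}{p^m} \equiv -2 \pmod p$ when $p$ is odd).

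The main obstacle will be to verify that the resulting polynomial in the Chern classes of $(-T_{p^m})^\vee$ is a nonzero element of $\Ch(Y_E)$ with $\F_p$-coefficients. This reduces to a question in the Schubert basis of $Y_E$: expand the class via Pieri's rule and check that some Kostka coefficient is nonzero modulo $p$, using the hook length formula or a further application of Lucas' theorem. A modest case analysis in $(p, m)$ is likely needed to produce the cycle uniformly in $k$ throughout the entire range $[2, p^n - p^m]$.
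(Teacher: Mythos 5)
Your high-level strategy is exactly the paper's: invoke Theorem \ref{main} with $Y = X(p^m,D)$, use Proposition \ref{Ch(XY)} to identify $\BCh(X\times Y)$ as the ring generated by the Chern classes $c_i(T_E)$ of $T_E = T_1 \boxtimes (-T_{p^m})^\vee$, and exhibit a rational cycle of codimension $d+k$ (where $d = p^n-1$) whose pushforward to $Y_E$ is nonzero. Your reduction of decomposability to the existence of a single shifted copy of $M(X)$ is also sound.

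However, the proposal has a genuine gap precisely at the step it labels its own ``main obstacle.'' The candidate family $c_s(T_E)\cdot c_r(T_E)$ with $s = p^n-1+k-r = p^m-1+k$ is not usable across the stated range: one needs $s \le r = p^n - p^m$, i.e.\ $k \le p^n - 2p^m + 1$, and this fails for the upper part of $[2, p^n-p^m]$. In fact it can fail for the entire range: for $p=2$, $m=2$, $n=3$ one has $r=4$ and $s = k+3 \ge 5 > r$ for every $k\ge 2$, so $c_s(T_E) = 0$ identically and the candidate cycle is zero. Beyond this, the nonvanishing of the $f_*$-pushforward is left as a sketch (``check some Kostka coefficient is nonzero modulo $p$; a modest case analysis is likely needed''), which is exactly the part that has to be nailed down to actually prove the corollary.

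The paper sidesteps both issues with a more structured choice. Writing out formula (\ref{tensor}) and using the hypotheses (which force $\binom{p^n-p^m}{2} \equiv \binom{p^n-p^m}{p^m-1} \equiv 0$ and $\binom{p^n-p^m-1}{p^m-2}\equiv (-1)^{p^m-2} \pmod p$), one gets $c_1(T)=1\times c_1$, $c_2(T) = -h\times c_1 + 1\times c_2$, $c_{p^m-1}(T) = (-1)^{p^m-2}h^{p^m-2}\times c_1 + \cdots$, and $c_r(T) = \sum_i h^{r-i}\times c_i$. The crucial observation is that $c_1(T)$ has no $h$-part, so $c_1(T)^{k-2}$ can pad the codimension to any $k\ge2$ without touching the $h$-degree; and the maximal $h$-powers of $c_2(T)$, $c_{p^m-1}(T)$, $c_r(T)$ have coefficients $-c_1$, $(-1)^{p^m-2}c_1$, $1$ respectively, summing in $h$-degree to exactly $1 + (p^m-2) + r = d$. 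Thus $\beta_k := c_r(T)\,c_{p^m-1}(T)\,c_2(T)\,c_1(T)^{k-2}$ satisfies $f_*(\beta_k) = c_1^k$, and $c_1^k\neq 0$ in $\Ch^k(Y_E;\F_p)$ for $k \le r$ because the relations in the Grassmannian Chow ring (Fulton, Example 14.6.6) only start in degree $r+1$. No case analysis and no Kostka-coefficient computation is needed. To repair your argument you would at a minimum need a different cycle for large $k$, and the paper's trick of padding with $c_1(T)^{k-2}$ is the clean way to get a uniform construction.
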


\begin{proof}

We use the notations: $X=X(1,D)$, $Y = X(p^m,D)$,
$d=\mathrm{dim}(X(1,D))=p^n-1$, $r=p^n-p^m$. Let $E =F(X)$, then
$E/F$ is a splitting field extension for the variety $X$ (and also
for $Y$). Over the field $E$
the algebra $D$ becomes isomorphic to $\mathrm{End}_{E}(V)$ for
some $E$-vector space $V$ of dimension $d+1=p^n$. We have $X_E
\simeq \mathbb{P}^{d}(V)$ and $Y_E \simeq G_{p^m}(V)$. Let $T_1$
and $T_{p^m}$ be the tautological bundles of rank $1$ and $p^m$ on
$X_E$ and $Y_E$ respectively. We note by $T$ the $r$-dimensional vector bundle $T_1 \boxtimes
(-T_{p^m})^{\vee}$ on $X_E \times Y_E$. By Proposition \ref{Ch(XY)}, the ring $\bar{\Ch}(X \times Y)$ is generated  by Chern classes of the vector bundle $T$.
 Let $h=c_1(T_1)\in \mathrm{Ch}^{1}(X_E)$
(then $-h$ is the hyperplane class in $\mathrm{Ch}^{1}(X_E)$) and
$c_i=c_i({(-T_{p^m})}^{\vee})\in \mathrm{Ch}^{i}(Y_E)$, $0 \leq i \leq r$. Then by
\cite[Remark 3.2.3(b)]{F}

\begin{equation}
\label{tensor}
c_t(T)=c_t(T_1
\boxtimes (-T_{p^m})^{\vee})= \sum_{i=0}^{r}(1+(h \times
1)t)^{r-i}(1\times c_i)t^i \, .
\end{equation}
It follows from the conditions
of the corollary that the binomial coefficients  ${{p^n-p^m} \choose
{2}}, {{p^n-p^m} \choose {p^m-1}}$ are divisible by $p$ and
${{p^n-p^m-1} \choose {p^m-2}} \equiv (-1)^{p^m-2}$ mod $p$.
Therefore
$$c_1(T) = (p^n-p^m)h\times 1 + 1 \times c_1 = 1 \times c_1 \,
,$$
$$c_2(T)= {{p^n-p^m} \choose {2}} h^2\times 1 + (p^n-p^m-1)h
\times c_1 + 1 \times c_2 = -h \times c_1 + 1 \times c_2 \, ,$$
\begin{multline*}
c_{p^m-1}(T)= {{p^n-p^m} \choose {p^m-1}} h^{p^m-1}\times 1 +
{{p^n-p^m-1} \choose {p^m-2}}h^{p^m-2}\times c_1 + \ldots =\\=
(-1)^{p^m-2}h^{p^m-2} \times c_1 + \ldots \, ,
\end{multline*}
 where ``\ldots"
stands for a linear
 combination of only those terms whose second factor has
 codimension $> 1$. For the top Chern class we have: $$c_r(T)=\sum_{i=0}^{r}
h^{r-i}\times c_i \, .$$

\noindent For any integer $k \geq 2$ we define $$\beta_k = c_r(T)c_{p^m-1}(T)
c_2(T) c_1(T)^{k-2} =(-h)^d \times c_1^k + \ldots \, =
 [pt] \times c_1^k + \ldots \, ,$$  where ``\ldots" stands for a linear
 combination of only those terms whose second factor has
 codimension $> k$ and where $[pt]$ is the class of a rational point in $\mathrm{Ch}(X_E)$.
Let $f: X \times Y \rightarrow X$ be a projection onto the first factor. The cycle $\beta_k$ is $F$-rational and $f_*(\beta_k) = c_1^k$.
By \cite[Example 14.6.6]{F}, the cycle $c_1^k$ is non-zero for $2 \leq k \leq p^n-p^m$. Therefore $\dim_{\mathbb{F}_p} f_* \, \mathrm{\bar{Ch}}_{\dim Y - k}(X \times Y) \geq 1$ for $2 \leq k \leq p^n-p^m$. The statement follows from Theorem \ref{main}.
\end{proof}

\begin{rem}
 The Corollary \ref{decomposability} also gives us some information
about the integral motive of the variety $X(p^m,D)$. Indeed,
according to \cite[Corollary 2.7]{P-S-Z} the decomposition of
$M(X(p^m,D))$ with coefficients in $\mathbb{F}_p$ lifts (and in a
unique way) to the coefficients $\mathbb{Z}/p^N \mathbb{Z}$ for
any $N \geq 2$. Then by \cite[Theorem 2.16]{P-S-Z} it lifts to
$\mathbb{Z}$ (uniquely for $p=2$ and $p=3$ and non-uniquely for
$p>3$). See also Remark \ref{two}.
\end{rem}

\begin{rem}
{\upshape Let $l$ be an integer such that $0<l<p^n$ and
$\mathrm{gcd}(l,p)=1$. The complete decomposition of the motive
$M(X(l,D))$ with coefficients in $\mathbb{F}_p$ is described in
\cite[Proposition 2.4]{CPSZ}}.
\end{rem}

\begin{cor}
\label{rigidity}
Let $D$ be a central division $F$-algebra of
$p$-primary index. Let $K/F$ be a field extension, such that $D_K$ is still division. Then the motive ${(M_{1,D})}_K$ is still indecomposable.
\end{cor}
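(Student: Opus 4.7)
The plan is to show $(M_{1,D})_K$ coincides with $M_{1,D_K}$ by combining the known rigidity of $M_{0,D}$ with the multiplicity formula of Theorem \ref{main}.

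First I would observe that $M_{1,D_K}$ appears as a summand of $(M_{1,D})_K$. Indeed, $(M_{1,D})_K$ is a summand of $M(X(p,D))_K = M(X(p,D_K))$ whose $0$-codimensional Chow group is non-zero, inherited from $M_{1,D}$. Since $M_{1,D_K}$ is by construction the unique indecomposable summand of $M(X(p,D_K))$ with this property, the Krull--Schmidt decomposition of $(M_{1,D})_K$ must contain $M_{1,D_K}$. Write $(M_{1,D})_K = M_{1,D_K} \oplus L$; the task is to prove $L = 0$.

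The heart of the argument is to compare the multiplicity of shifts $M_{0,D}(k) = M(X(1,D))(k)$ in the complete motivic decomposition of $M(X(p,D))$ (over $F$) with the multiplicity of shifts $M_{0,D_K}(k)$ in $M(X(p,D_K))$ (over $K$). Theorem \ref{main}, applied over both fields with $Y = X(p,D)$ (resp. $Y_K$) and $X = X(1,D)$ (resp. $X(1,D_K)$), expresses these two numbers as the $\mathbb{F}_p$-dimension of $f_* \bar{\mathrm{Ch}}_{\dim Y - k}$ of the corresponding product. By Proposition \ref{Ch(XY)}, both of these rings are generated by the Chern classes of the bundle $T_1 \boxtimes (-T_{p})^{\vee}$, constructed from tautological bundles that exist intrinsically on $X(1,\cdot) \times X(p,\cdot)$. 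Since this construction is insensitive to base extension, the two dimensions agree for every $k$, so the two multiplicities coincide. Using $(M_{0,D})_K = M_{0,D_K}$ (Corollary \ref{M(SB(1,D))}), the $M_{0,D}(k)$ summands over $F$ base-change bijectively onto \emph{all} the $M_{0,D_K}(k)$ summands over $K$; by Krull--Schmidt, $L$ therefore contains no shift of $M_{0,D_K}$.

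The remaining and main obstacle is to exclude summands of $L$ of the form $M_{m',D_K}(j)$ with $m' \geq 1$ and $(m',j) \neq (1,0)$. Any such summand must have $j > 0$, because $(M_{1,D})_K$ inherits a one-dimensional $0$-codimensional Chow group fully accounted for by $M_{1,D_K}$. I expect to rule these out by a rank comparison: $\mathrm{rk}(M_{1,D}) = \mathrm{rk}((M_{1,D})_K)$ since rank is insensitive to further field extension, and an analogous counting argument (replacing $X(1,\cdot)$ by other Severi--Brauer flag varieties in Theorem \ref{main}-type estimates, and again invoking the intrinsic Chern-class description of Proposition \ref{Ch(XY)}) should force $\mathrm{rk}(M_{1,D}) = \mathrm{rk}(M_{1,D_K})$, hence $L = 0$. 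This rank-rigidity step is the delicate part of the argument and is where one must be most careful that only intrinsic, base-field-independent data is used.
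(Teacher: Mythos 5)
Your first two paragraphs are on the right track and in fact reproduce the heart of the paper's argument: using Theorem \ref{main} together with Proposition \ref{Ch(XY)} to show that the number of shifts of $M(X(1,D))$ in the complete decomposition of $X(p,D)$ is the same over $F$ and over $K$, because the subring of rational cycles in $\Ch(X_E\times Y_E)$ is generated by Chern classes of the intrinsic bundle $T_1\boxtimes(-T_p)^\vee$ and is therefore unchanged by the extension $K/F$. So far, so good.

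The genuine gap is in your last paragraph, and it comes from overlooking how strongly Karpenko's \cite[Theorem 3.8]{upper} constrains the decomposition of $Y=X(p,D)$. That theorem (quoted in the introduction and used verbatim in the paper's proof) says the complete motivic decomposition of $Y$ consists of $M_{1,D}$ together with shifts of $M=M(X(1,D))$ only --- there simply are no summands of the form $M_{m',D}(j)$ with $m'\ge 2$, and $M_{1,D}$ occurs only once. The same holds for $Y_K=X(p,D_K)$ over $K$. Hence, once you have established (as you did) that the number of shifts of $M$ in $M(Y)$ equals the number of shifts of $M_K$ in $M(Y_K)$, the summand $L$ in $(M_{1,D})_K=M_{1,D_K}\oplus L$ has nowhere to go: it cannot contribute shifts of $M_K$ (by the multiplicity equality), and by Theorem 3.8 it cannot contribute anything else. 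So $L=0$, and the proof is complete. There is no ``delicate rank-rigidity step'' to perform, and the plan you sketch --- applying Theorem \ref{main}-type counting with other flag varieties in place of $X(1,\cdot)$ --- would not work as stated: Theorem \ref{main} is proved specifically for the usual Severi--Brauer variety $X(1,D)$, using Lemma \ref{SB(1,D)} about its diagonal class, and does not extend to counting copies of $M_{m',D}$ for $m'\ge 1$. The fix is simply to invoke \cite[Theorem 3.8]{upper} for the variety $X(p,D)$ (over both $F$ and $K$) to eliminate all candidate summands other than shifts of $M$, which is exactly what the paper does.
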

\begin{proof}

We note by $X$ and $Y$ respectively the varieties $X(1,D)$ and $X(p,D)$. We note by $M$ the motive $M(X)$.
By \cite[Theorem 3.8]{upper} the complete motivic decomposition of the variety $Y$ consists of the motive $M_{1,D}$ and of the sum of motives $M$ (we neglect the shifts in this proof).  Suppose that the motive  ${(M_{1,D})}_K$ is decomposable, then by the same theorem, $M_K$ is a summand of ${(M_{1,D})}_K$. Therefore, the number of motives $M_K$ in the complete motivic decomposition of $Y_K$ is greater then the number of motives $M$ in the complete motivic decomposition of $Y$.
Let $E/K$ be a splitting field extension for the algebra $D$. By Proposition \ref{Ch(XY)}, the subspace of $K$-rational cycles in $\Ch(X_E \times Y_E)$ coincides with the subspace of $F$-rational cycles in $\Ch(X_E \times Y_E)$. Therefore the Theorem \ref{main} gives a contradiction.
\end{proof}

\section
{Complete motivic decompositions}
\label{examples}

In the Corollary \ref{decomposability} we proved that the motive of the variety $X(p^m,D)$ is decomposable for
$p=2$, $1 < m <n$ and for $p>2$, $0<m<n$. Moreover, in these cases the Corollary \ref{decomposability} gives us a list of some motivic summands of the variety $X(p^m,D)$. By duality, we can extend this list. It happens, that in two small-dimensional cases $p=3$, $m=1$, $n=2$ and $p=2$, $m=2$, $n=3$ this is already a complete list of indecomposable motivic summands of the variety $X(p^m,D)$. Note that in general it is not true (see Example \ref{example2}).

\begin{example}
\label{example1}
In this example we describe the
complete motivic decomposition of $Y:=X(3,D)$ for a division $F$-algebra $D$ of degree $9$. We note by $X$ the variety $X(1,D)$ and
by $M$ the motive $M(X)$. Note that $\dim X =8$ and $\dim Y =18$.

By \cite[Theorem 3.8]{upper}, any indecomposable motivic summand of $Y$, besides the upper motive $M_{1,D}$, is some shift of $M$. By Corollary \ref{decomposability}, the motives
$M(2)$, $M(3)$, $M(4)$, $M(5)$, $M(6)$ and by duality $M(8)$, $M(7)$ are
direct summands of $M(Y)$. Suppose that there is at least one more motive $M(t)$ (for some integer $t\geq0$) in the complete motivic decomposition of $Y$. Since by \cite[Theorem 4.3]{upper} the variety $Y$ is $3$-incompressible, we have
$$\rk_{0} M(Y) = \rk_{0} M_{1,D} = \rk_{\dim Y} M_{1,D} = \rk_{\dim Y} M(Y) = 1 \, .$$
It follows that $\rk_0 M(t) = \rk_{\dim Y} M(t) =0$. We have $1 \leq t \leq \dim Y - \dim X - 1 =9$. Since the decomposition of any of eight motives
$M(2)$, $M(3)$, ..., $M(8),M(t)$ into the sum of Tate motives over
the splitting field contains a Tate motive $\mathbb{F}_3(9)$, then $\rk_9 M_{1, D} \leq \rk_9 M(Y) - 8$. According to \cite[\S 2.5]{P-S-Poicare}, we have $\rk_9 M(Y) = 8$, therefore $\rk_9 M_{1, D} = 0$.

By \cite[Corollary 10.19]{K-Coh}, we have the following motivic decomposition of $Y$ over the function field $L=F(Y)$:
\begin{equation} \label{decff}
M(Y)_L = \bigoplus_{i+j+k =3} M(X(i, C)\times X(j, C)\times X(k,C))\, ,
\end{equation}
where $C$ is a central division $L$-algebra (of degree $3$) Brauer-equivalent to $D_L$. Note that the triples $(3,0,0)$, $(0,3,0)$, $(0,0,3)$ correspond to three Tate motives $\mathbb{F}_3$, $\mathbb{F}_3(9)$ and $\mathbb{F}_3(18)$. Let $\widetilde{M} = M(X(1,C))$, then by  \cite[Example 10.20]{K-Coh},  $M_L = \widetilde{M} \oplus \widetilde{M}(3) \oplus \widetilde{M}(6)$. It follows that the complete decomposition of $M_L$ does not contain $\mathbb{F}_3(9)$.
Therefore $\mathbb{F}_3(9)$ is a direct motivic summand of $(M_{1,D})_L$ and we have a contradiction with $\rk_9M_{1,D} =0$.

The complete motivic decomposition of the variety $X(3, D)$ with
coefficients in $\mathbb{F}_3$ is the following one:
\begin{equation} \label{decom1}
M(X(3,D))= M_{1,D} \oplus M(2) \oplus M(3) \oplus M(4) \oplus M(5)
\oplus M(6) \oplus M(7) \oplus M(8) \, .
\end{equation}
\end{example}

\begin{example} Similarly, as in the previous example, we can find the
complete motivic decomposition of $Y:=X(4,D)$ for a division $F$-algebra $D$ of degree $8$.  We note
by $M$ the motive $M(X(1,D))$.

By Corollary \ref{decomposability}, the motives
$M(2)$, $M(3)$, $M(4)$ and by duality $M(7)$, $M(6)$, $M(5)$ are
direct summands of $M(Y)$. We have
 $$M(X(4,D))=M(2)\oplus ... \oplus M(7)\oplus N$$
 \noindent for some motive $N$. Assume that $N$ is
decomposable. Then by \cite[Theorem 3.8]{upper}, and Theorems
\ref{M(SB(1,D))}, \ref{M(SB(2,D))}, the motive $N$ has an
indecomposable summand which is some shift of either $M_{0,D} = M$ or
$M_{1,D} =M(X(2,D))$. But the second case is impossible because $$70
={{8}\choose{4}}=\mathrm{rk}\, M(Y) < 6\, \mathrm{rk}M +
\mathrm{rk}\, M(X(2,D))= 6 \cdot 8 + {{8}\choose{2}}= 76 \, ,$$ 
\noindent see \cite[Example 2.18]{upper} for the computations of ranks.
Therefore $M(t)$ is a summand of $N$ for some integer $t$.

According to \cite[Corollary 10.19]{K-Coh}, we can write the
complete decomposition of $N$ over the function field
$L=F(Y)$:
$$ N_L=\mathbb{F}_2 \oplus \widetilde{M}(1) \oplus M(X(2,C))(4) \oplus M(X(2,C))(8) \oplus \widetilde{M}(12)
\oplus \mathbb{F}_2(16)\, , $$ where $C$ is a central division
$L$-algebra (of degree $4$) Brauer-equivalent to $D_L$ and where
$\widetilde{M} = M(X(1,C))$. It follows from this decomposition
that the motive $M(t)_L = \widetilde{M}(t) \oplus
\widetilde{M}(t+4)$ can not be a direct summand of $N_L$. We have a
contradiction. Therefore the motive $N$ is indecomposable and $N \simeq M_{2,D}$.\\ Now we can write
the complete motivic decomposition of $X(4,D)$ with
coefficients in $\mathbb{F}_2$:
\begin{equation} \label{decom2}
M(X(4,D))= M_{2,D} \oplus M(2) \oplus M(3) \oplus M(4) \oplus M(5)
\oplus M(6) \oplus M(7) \, .
\end{equation}
\end{example}

 Let us consider the following class of generalized Severi-Brauer varieties.
 \begin{dfn}
 We say that the generalized Severi-Brauer variety $X(p^m,D)$ is \textit{of type} 0, if the complete decomposition of $M(X(p^m,D))$ consists only of the upper motive $M_{m,D}$ and some (possibly zero) shifts of the motive $M_{0,D} = M(X(1, D))$.
 \end{dfn}
  For example, by \cite[Theorem 3.8]{upper}, the variety $X(p, D)$ is of this type. Let $Y$ be a generalized Severi-Brauer $X(p^m,D)$ variety of type $0$. By Theorem \ref{main}, the subspace of $F$-rational cycles in $\Ch(X_E \times Y_E)$ describes the complete motivic decomposition of $Y$, where $X=X(1, D)$, $E=F(X)$. Note that the structure of the ring $\Ch(X_E \times Y_E) = \Ch(X_E) \times \Ch(Y_E)$ is well-known (cf. \cite[\S\,14]{F}) and by Proposition \ref{Ch(XY)} we can compute the subring $\bar{\Ch}(X\times Y) \subset \Ch(X_E \times Y_E)$. Therefore we can say that the complete motivic decomposition of any generalized Severi-Brauer variety $X(p^m,D)$ of type $0$ can be ``theoretically'' found in a finite time using computer.

\begin{rem}
We do not possess a single example of a variety $X(p^m,D)$, which is not of type $0$. Therefore, it may happen that the generalized Severi-Brauer variety $X(p^m,D)$ is always of type $0$ (for any division $F$-algebra $D$ of degree $p^n$ and for any integer $m$, $0 \leq m \leq n$). Note that if this is true, then Conjecture \ref{conj} holds (one can follow the lines of the proof of Corollary \ref{rigidity}).
\end{rem}

\begin{example}
 \label{example2}
 Let $D$ be a central division $F$-algebra of degree $27$. In this example we find complete motivic decomposition of the variety $Y=X(3,D)$, which is of type $0$. We take the same notations as in the proof of Corollary \ref{decomposability}: $X=X(1,D)$, $E=F(X)$, $T=T_1 \boxtimes
(-T_3)^{\vee}$, where $T_1$
and $T_3$ are the tautological bundles of rank $1$ and $3$ on
$X_E$ and $Y_E$ respectively (the vector bundle $T$ is of the rank $24$). We note also by $V_{*}$ the graded $\mathbb{F}_3$-vector space  $f_* \, \mathrm{\bar{Ch}}_{\dim Y - *}(X \times Y)$, where $f$ is a projection onto the second factor.

By Theorem \ref{main}, for any integer $k$ the number of motives $M(k)$  in the complete motivic decomposition of $Y$ is equal to $\dim_{\mathbb{F}_3} V_k$, where $M=M(X)$. By duality, this number is also equal to the number of motives $M(\dim Y - \dim X - k) = M(46-k)$ in the same decomposition. Therefore the vector space $V_{\leq 23}$ describes the complete motivic decomposition of $Y$.

Let $h=c_1(T_1)\in \mathrm{Ch}^{1}(X_E)$
and $c_i=c_i({(-T_{3})}^{\vee})\in \mathrm{Ch}^{i}(Y_E)$, $0 \leq i \leq 24$. Using the formula \ref{tensor} we can compute the following Chern classes of the vector bundle $T$:
$$
\begin{array}{rl}
c_1(T)=1 \times c_1 \, , & c_2(T)=-h \times c_1 + 1\times c_2 \, ,\\
c_7(T)=1 \times c_7 \, , & c_8(T)=-h \times c_7 + 1 \times c_8 \, ,\\
\multicolumn{2}{c}{c_{24}(T)= h^{24} \times 1 + \sum_{i=1}^{24} h^{24-i}\times c_i \, .}
\end{array}
$$
We have:
$$ \begin{array}{rl}
c^2_1 = f_*\big(c_{24}(T)(c_2(T))^2\big) \in V_2 \, , & c_1c_7=f_*\big(c_{24}(T)c_2(T)c_8(T)\big)\in V_8 \, , \\
\multicolumn{2}{c}{c^2_7 = f_*\big(c_{24}(T)(c_8(T))^2 \big)\in V_{14} \, .}
\end{array}$$
Also we have the following property:
\begin{equation} \label{c1c7}
x \in V_* \Rightarrow (xc_1 \in V_{*+1} \quad \text{and} \quad xc_7 \in V_{*+7})\, .
\end{equation}
Indeed, if $x \in V_*$, then $x = f_*(y)$ for some $y \in \mathrm{\bar{Ch}}_{\dim Y - *}(X_E \times Y_E)$. Therefore $xc_1 = f_*(y\cdot c_1(T))\in V_{*+1} $ and $xc_7 = f_*(y\cdot c_7(T))\in V_{*+7}$.\\
This property gives us the following elements in $V_i$:

\begin{equation} \label{list}
\begin{array}{llll}
c^i_1 & \mbox{ if } 2 \leq i \leq 7 \, , \; & c_1^i, c_1^{i-7}c_7, c_1^{i-14}c^2_7 & \mbox{ if } 14 \leq i \leq 20 \, ,\\
c_1^i, c_1^{i-7}c_7 & \mbox{ if } 8 \leq i \leq 13 \, , \; & c_1^i, c_1^{i-7}c_7, c_1^{i-14}c^2_7, c_1^{i-21}c^3_7 & \mbox{ if } 21 \leq i \leq 23 \, .\\
\end{array}
\end{equation}
We also define a sequence $b_i$, $i \in \Z$:
$$b_i = \left\{ \begin{array}{cl}
0 & \mbox{ if } i<2 \\
1 & \mbox{ if } 2 \leq i \leq 7\\
2 & \mbox{ if } 8 \leq i \leq 13\\
3 & \mbox{ if } 14 \leq i \leq 20\\
4 & \mbox{ if } 21 \leq i \leq 23\\
b_{46-i} & \mbox{ if } i>23 .
\end{array} \right.$$
Note that for any $i \leq 23$ the number of elements lying in $V_i$ from the list \ref{list} is equal to $b_i$.


We are going to show that all elements from the list \ref{list} are linearly independent (to apply then Theorem \ref{main}).
The $\mathbb{F}_3$-vector space $V_*$ is a subspace of $\mathrm{Ch}^*(Y_E)$. We note $\tilde{c}_i =c_i(T^{\vee}_{3})$, $i=1,2,3$, where $T_{3}$ is the tautological bundle of rank $3$ on $Y_E$. According to \cite[Example 14.6.6]{F} the graded ring $\mathrm{Ch}^*(Y_E)$ is generated  by Chern classes $\tilde{c_1}, \tilde{c_2}, \tilde{c_3}, c_1,...,c_{24}$ modulo the homogeneous relations
\begin{equation} \label{relation}
c_r + c_{r-1}\tilde{c_1} + c_{r-2}\tilde{c_2}+c_{r-3}\tilde{c_3} = 0 \quad  \mbox{for} \quad r=1,..., 27 \, ,
\end{equation}
where $c_i=0$ for $i \not{\!\in} \,[0,24]$. It follows that the graded ring $\mathrm{Ch}^*(Y_E)$ is generated by only three elements $\tilde{c_1}, \tilde{c_2}, \tilde{c_3}$ modulo some homogeneous relations of degree greater than $23$. Therefore we have an isomorphism: $$\mathrm{Ch}^{* \leq 23}(Y_E) \simeq \mathbb{F}_3[\tilde{c_1}, \tilde{c_2}, \tilde{c_3}]_{\leq 23}\, .$$
 
 \noindent Using relations \ref{relation} we can compute that $c_1 = -\tilde{c_1}$ and $c_7= -\tilde{c_1}^7 + \tilde{c_1}^4\tilde{c_3} - \tilde{c_1}^3\tilde{c_2}^2 + \tilde{c_1}\tilde{c_2}^3$.

Now we consider the elements from the list \ref{list} as polynomials in $\mathbb{F}_3[\tilde{c_1}, \tilde{c_2}, \tilde{c_3}]$. To show that all of them are linearly independent, it suffices to check this for four elements $c_1^{21}, c_1^{14}c_7, c_1^{7}c^2_7, c^3_7$ (they are in our list) from $V_{21}$. Since the polynomial ring $\mathbb{F}_3[\tilde{c_1}, \tilde{c_2}, \tilde{c_3}]$ is factorial and $c_7$ is not divisible by $\tilde{c}^2_1$, then for any $\alpha, \beta, \gamma, \delta \in \mathbb{F}_3$ we have

$$\alpha c_1^{21} + \beta c_1^{14}c_7 + \gamma c_1^{7}c^2_7 + \delta c^3_7 = 0  \quad \Longrightarrow \quad \alpha=\beta=\gamma=\delta=0 \, .$$

 Since all elements from the list \ref{list} are linearly independent, then $\dim_{\mathbb{F}_3} V_i \geq b_i$ for $i \leq 23$. Therefore for any integer $i$ the motive $M^{\oplus b_i}(i)$ is a direct summand of $M(Y)$. Indeed, the statement follows from Theorem \ref{main} for $i \leq 23$ and by duality it is also true  for $i>23$. We have
\begin{equation}
M(Y)=\oplus_{i\in \Z} M^{\oplus b_i}(i) \oplus N
\end{equation}
for some motive $N$ over $F$.

Now we want to understand the complete decomposition of $N$.
Let $L$ be a function field of the variety $Y$ and $C$ be a central division $L$-algebra (of degree $9$) Brauer-equivalent to $D_L$.
Using the motivic decomposition similar to the decomposition \ref{decff} from Example \ref{example1}, we can show that the complete decomposition of $M(Y)_L$ contains three indecomposable motives: $M_{1,C}$, $M_{1,C}(27)$, $M_{1,C}(54)$. Moreover any other summand in the complete motivic decomposition of $M(Y)_L$ is a shift of the motive $\widetilde{M}: = M(X(1,C))$. We know that $M_L = \widetilde{M} \oplus \widetilde{M}(9) \oplus \widetilde{M}(18)$. It follows that  $$N_L = M_{1,C} \oplus M_{1,C}(27) \oplus M_{1,C}(54) \oplus N'$$
 \noindent for some motive $N'$ over $L$ and $N'$ is a sum of shifts of the motive $\widetilde{M}$. Note that if $M(k)$ is direct summand of $N$ for some integer $k$, then $M_L(k)$ is a direct summand of $N'$.

Let $S$ be a direct summand of the motive of a geometrically split variety. We write $P(S,t)$ for the Poincar\'e polynomial of $S$:
$$P(S,t)= \sum_{i\geq 0} \, (\rk_i S) \cdot t^i \, .$$ Let us find the Poincar\'e polynomial of the motive $N'$. We have
$$P(N',t) =  P(M(Y),t) - (1+t^{27} + t^{54})P(M_{1,C},t)- \sum_{i \geq 0} b_i t^i P(M,t) \, . $$
Using the following formulas $$P(M(Y),t) = \frac{(1-t^{27})(1-t^{26})(1-t^{25})}{(1-t)(1-t^2)(1-t^3)} \, , \mbox{ (according to \cite[\S 2.5]{P-S-Poicare})},$$
$$P(M,t)= \frac{1-t^{27}}{\! \! \! 1-t} = \sum_{i=0}^{26} t^i \, ,$$
$$ P(M_{1,C},t)=t^6 + t^{12} +\sum_{i=0}^{26} t^i   \, ,  \mbox{ (by Example \ref{decom1}) },$$
we can compute $P(N',t)$. Since $N'$ is a sum of shifts of the motive $\widetilde{M}$ then $P(N',t)$ is divisible by $P(\widetilde{M},t) = (1-t^9)/(1-t)=1+t+...+t^8$. Let $Q(t)$ be a quotient of these two polynomials. After computations, we have
\begin{multline*}
Q(t) = t^7+t^{13}+t^{16}+t^{18} + t^{19}+t^{20}+t^{22}+t^{24}+t^{26}+t^{28}+t^{29}+t^{30}+t^{34}+t^{35}+\\ t^{36}+t^{38}+
t^{40}+t^{42}+t^{44}+t^{45}+t^{46}+t^{48}+t^{51}+t^{57} \, .
\end{multline*}

Now if $M(k)$ is a direct summand of $N$ for some integer $k$, then $$M_L(k) =  \widetilde{M}(k) \oplus \widetilde{M}(k+9)\oplus \widetilde{M}(k+18)$$ is a direct summand of $N'$. Therefore in this case the decomposition of $Q(t)$ contains $t^k+t^{k+9}+t^{k+18}=P(M_L(k),t)/P(\widetilde{M},t)$. Only two values $k=20$ and $k=26$ satisfy this condition. Note that if complete decomposition of the motive $N$ contains $M(20)$ then by duality it contains also $M(26)$. It follows that the question of the complete motivic decomposition of $Y$ reduces to the question either $\dim_{\mathbb{F}_3} V_{20} = 3$ or $\dim_{\mathbb{F}_3} V_{20} = 4$?
Let us show that we are in the second case. Consider the following cycle $e$ from $V_{20}$
\begin{multline*}
 e = f_*(c^{11}_2(-T)c_3^8(-T)) = -\tilde{c}^{17}_1\tilde{c}_3 + \tilde{c}^{16}_1\tilde{c}^2_2 - \tilde{c}^{14}_1\tilde{c}^3_2 - \tilde{c}^{14}_1\tilde{c}_3^2 - \tilde{c}^{13}_1\tilde{c}_2^2\tilde{c}_3 - \tilde{c}_1^{12}\tilde{c}_2^4 + \\ \tilde{c}_1^{11}\tilde{c}_2^3\tilde{c}_3 - \tilde{c}^{11}_1\tilde{c}^3_3 - \tilde{c}^{10}_1\tilde{c}_2^5 - \tilde{c}_1^2\tilde{c}_2^9 \, ,
\end{multline*}
where $c_2(-T)= - h\tilde{c}_1 + \tilde{c}_2$, $c_3(-T)= h^3 + h^2\tilde{c}_1 + h\tilde{c}_2 +\tilde{c}_3$. The cycle $e$ as a polynomial in $\mathbb{F}_3[\tilde{c_1}, \tilde{c_2}, \tilde{c_3}]$ is not divisible by $\tilde{c}_1^3$. It follows that the cycle $e$ could not be a linear combination of three cycles $c^{20}_1,c_1^{13}c_7, c_1^{6}c^2_7$ from the list \ref{list}.
 Therefore $\dim_{\mathbb{F}_3} V_{20} = 4$.

Consider a sequence $(a_i)_{i\in \Z}$ defined by
$$a_i = \left\{ \begin{array}{ll}
b_i +1 & \mbox{ if } i=20 \mbox{ or } i=26 \\
b_i & \mbox{ else. }
\end{array} \right.$$
The complete motivic decomposition of the variety $Y$ is the following
\begin{equation} \label{decom3}
M(Y)=\oplus_{i\in \Z} M^{\oplus a_i} \oplus M_{1,D} \, .
\end{equation}
\end{example}

\begin{rem} \label{two}
We have the same decompositions as (\ref{decom1}), (\ref{decom2}) and (\ref{decom3}) for the motives with the integral coefficients. To show this one can
apply \cite[Corollary 2.7]{P-S-Z} and then \cite[Theorem
2.16]{P-S-Z}.
\end{rem}

\def\cprime{$'$}

\end{document}